\newtheorem{theorem}{Theorem}[section]
\newtheorem{proposition}[theorem]{Proposition}
\newtheorem{corollary}[theorem]{Corollary}
\theoremstyle{definition}
\newtheorem{definition}[theorem]{Definition}
\theoremstyle{remark}
\numberwithin{equation}{section}
\begin{document}

\title{Orthogonal Polynomials on Bubble-Diamond Fractals}

\pagestyle{myheadings} \thispagestyle{plain}
\markboth{E. P. AXINN, C. OSBORNE, K. A. OKOUDJOU, O. RIGATTI, AND H. SHI}{ORTHOGONAL POLYNOMIALS ON BUBBLE-DIAMOND FRACTALS}

\author{Elena Axinn}
\address{Department of Mathematics, University of Michigan, Ann Arbor, MI 48109}
\email{eaxinn@umich.edu}

\author{Calvin Osborne}
\address{Department of Mathematics, Harvard University, Cambridge, MA 02138}
\email{cosborne@alumni.harvard.edu}

\author{Kasso A.~Okoudjou}
\address{Department of Mathematics, Tufts University, Medford, MA 02155}
\email{kasso.okoudjou@tufts.edu}

\author{Olivia Rigatti}
\address{Department of Mathematics, North Carolina State University in Raleigh, NC 27695}
\email{origatt@ncsu.edu}

\author{Helen Shi}
\address{Department of Mathematics, Tufts University, Medford, MA 02155}
\email{yues.0510@gmail.com}

\date{\today}

\subjclass[2000]{Primary 42C05, 28A80 Secondary 33F05, 33A99}
\keywords{spectral graph theory, Legendre orthogonal polynomials, pcf fractals, bubble-diamond fractals}

\begin{abstract}
We develop a theory of polynomials and, in particular, an analog of the theory of Legendre orthogonal polynomials on the bubble-diamond fractals, a class of fractal sets that can be viewed as the completion of a limit of a sequence of finite graph approximations. In this setting, a polynomial of degree $j$ can be viewed as a multiharmonic function, a solution of the equation $\Delta^{j+1}u=0$. We prove that the sequence of orthogonal polynomials we construct obeys a three-term recursion formula. 
\end{abstract}

\maketitle
\pagestyle{myheadings} \thispagestyle{plain}
\markboth{E. A. AXINN, C. OSBORNE, K. A. OKOUDJOU, O. RIGATTI, AND H. SHI}{ORTHOGONAL POLYNOMIALS ON BUBBLE-DIAMOND FRACTALS}

\section{Introduction}

During the last two decades, 
a theory of calculus on fractal sets such as the Sierpinski gasket ($SG$) has been developed. It is based on the spectral analysis of the fractal Laplacian \cite{CalcSGII,CalcSGI,splinesBob}.
In this context, a polynomial of degree $j$ on $SG$ is a solution of the equation $\Delta^{j+1}u=0$, where $\Delta$ denoted the Laplacian on $SG$ \cite{KigamiBook2001,BobBook2006}. Although most aspects of the theory of polynomials in this setting parallel their counterparts on the unit interval, several striking differences exist. In particular, there is no analog of the Weierstrass Theorem on $SG$; that is, the set of polynomials on $SG$ is not complete on $L^2(SG)$ \cite[Theorem 4.3.6]{KigamiBook2001}. Furthermore, the space of polynomials on $SG$ is not an algebra under pointwise multiplication. However, a theory of orthogonal polynomials on $SG$ was initiated in \cite{OST} and resulted in an analog of Legendre orthogonal polynomials on $[-1,1]$. 

This paper studies the bubble-diamond fractals, a class of fractals defined by a branching parameter $b \in \mathbb{N}$. One significant interest in exploring this class of fractals is that they present different geometrical properties, including a wide range of Hausdorff and spectral dimensions. In general, diamond-type self-similar graphs have provided an essential collection of structures with interesting physical and mathematical properties and a broad variety of geometries  \cite{MalozemovTeplyaev1995,Akkermans2009ComplexDim, HamblyKumagai2010}. The structure of these fractals is such that they combine spectral properties of Dyson hierarchical models and transport properties of one-dimensional chains. In what follows, we will denote by $K_b$ the bubble-diamond fractal with branching parameter $b\in \mathbb{N}$, see Definition~\ref{def:bdf}.

In this paper, we develop a theory of polynomials and orthogonal polynomials theory on this class of bubble-diamond fractals. First, in Section~\ref{sec:defbdf}, we define the bubble-diamond fractal $K_b$ as a particular completion of a limit of self-similar bubble-diamond graphs introduced in \cite{BubbleDiamond2023}. Subsequently, we introduce the main analytic tools on $K_b$: the Laplacian and the Green operator. Next, in Section~\ref{sec:Poly}, we define and investigate the fundamental properties of polynomials on the bubble-diamond fractals following the approach developed in  \cite{splinesBob}. Finally, in Section~\ref{sec:OP}, we introduce a class of monomials and construct analogues of the Legendre orthogonal polynomials on $K_b$, providing asymptotic results on relevant coefficients and defining a three-term recursion relation for the Legendre orthogonal polynomials.

\section{Bubble-Diamond Fractals}\label{sec:defbdf}

In this section, we define the bubble-diamond fractals as the completion of the limit of finite graph approximations. To this end, we will first define these finite graph approximations, then construct a suitable metric on the union of these finite graph approximations, and finally define the bubble-diamond fractals. Throughout the process, we will develop several analytic tools on $K_b$ that will be essential in creating a theory of polynomials on the bubble-diamond fractal.

\subsection{Analytic Tools on Finite Bubble-Diamond Graphs} First, we will construct a sequence of finite bubble-diamond graphs $G_\ell$ that approximate $K_b$. In the process, we will develop several analytic tools, including the graph Laplacian $\Delta_\ell$ and Green's function $G_\ell$, on these finite graph approximations that we will extend to $K_b$ in the following subsection. Note below, as will occur often throughout this paper, that we drop the dependence on the branching parameter $b \in \mathbb{N}$ to simplify our notation.

\begin{figure}[b!]
        \centering
        \includegraphics[width=1.0\linewidth]{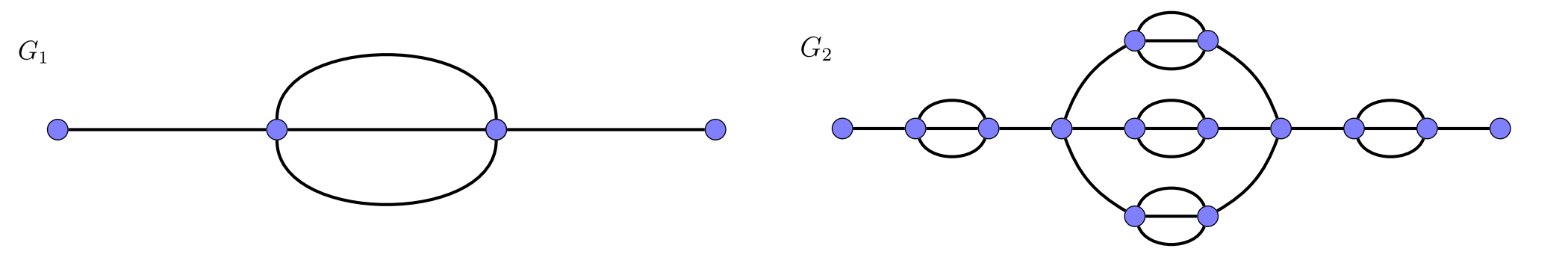}
    	\caption{Bubble-diamond graphs $G_1$ and $G_2$ for $b=3$.}
    	\label{fig:bubblediamondgraphb3}
\end{figure}

\begin{definition}\label{def:BubbleDiamondGraphs}
For an integer $b\geq 1$, the  \textit{bubble-diamond graphs with branching parameter $b$} are an inductively constructed sequence of graphs $G_{\ell}=(V_{\ell},E_{\ell})$ for $\ell \in \mathbb{N}$, where $V_{\ell}$ and $E_\ell$ are respectively the sets of vertices and edges of $G_\ell$. 

First, we will let $G_{0}=(V_0,E_0)$ be the graph with two vertices $V_0=\{q_1,q_2\}$ joined by an edge $(q_1,q_2) \in E_0$. Then, at level $\ell \in \mathbb{N}$, we construct $G_{\ell}$ by modifying each edge from  $G_{\ell-1}$ by introducing two new vertices, each of which is joined to one of the two original vertices by an edge and which are joined to each another by $b$ distinct edges. Note in particular that $V_0 \subset V_1 \subset \dots$ for each subsequent level of bubble-diamond graphs.

For a particular instance of bubble-diamond graphs, see $G_1$ and $G_2$ for $b = 3$ in Figure~\ref{fig:bubblediamondgraphb3}.
\end{definition}

There is an equivalent construction of the vertices $V_\ell$ of the bubble-diamond graphs through a set of mappings that will be important in constructing polynomials on the bubble-diamond fractals. To see this, we note that we can realize the general structure of $G_{\ell + 1}$ by gluing together $b + 2$ copies of $G_\ell$ as in Figure~\ref{fig:bubblediamondgraphgeneral}. This gives us a series of maps $F^\ell_i : V_\ell \to V_{\ell + 1}$ for $i \in \{1, 2, \dots, b + 2\}$ given by
\begin{equation}
    F_i^\ell(p) = \text{the corresponding point to }p\text{ in }G_\ell^{\text{copy }i}.
\end{equation}
One important observation is that $F_i^\ell(p) = F_i^{\ell + n}(p)$ for all $n > 0$ by considering $V_\ell \subset V_{\ell + n}$, and so we will drop the dependence on $\ell$ in the notation $F_i : V_\ell \to V_{\ell + 1}$. With this map defined, we can construct 
\begin{equation}
    V_{\ell + 1} = \bigcup_{i \in \{1, 2, \dots, b + 2\}}F_i(V_{\ell}),
\end{equation}
which is equivalent to the direct geometric construction of $V_{\ell + 1}$ above.

\begin{figure}[b!]
        \centering
        \includegraphics[width=0.7\linewidth]{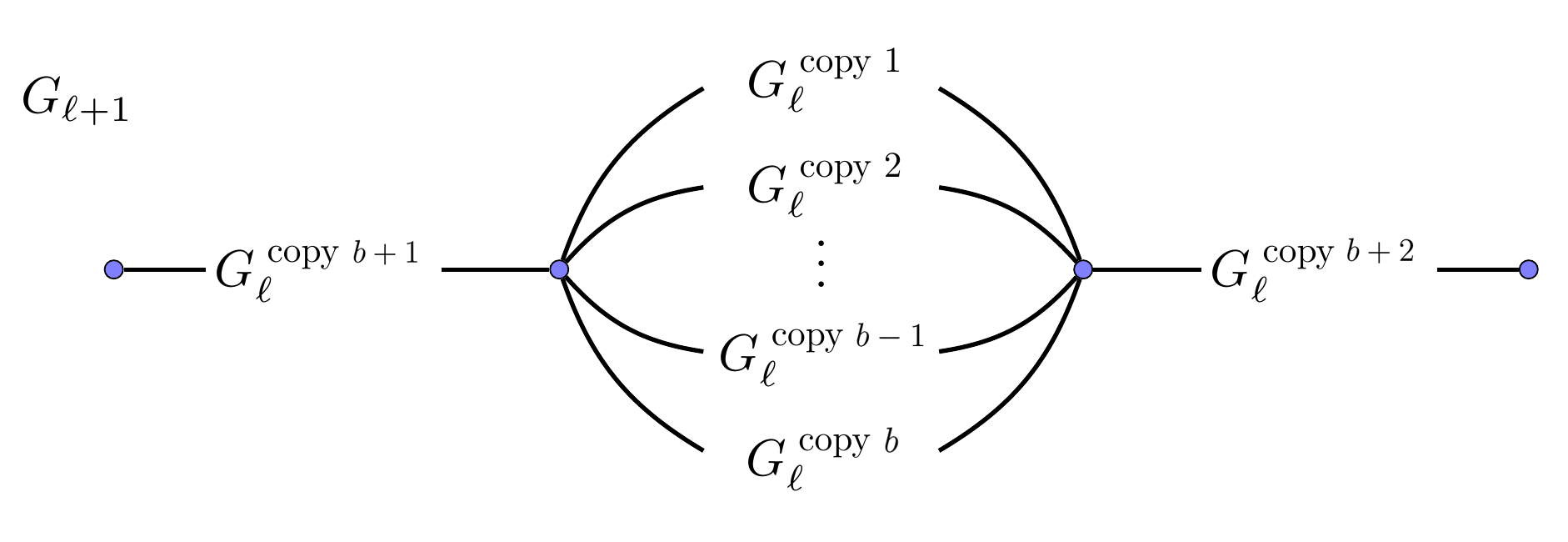}
    \caption{Constructing $G_{\ell+1}$ by gluing together $b+2$ copies of $G_{\ell}$.}
    \label{fig:bubblediamondgraphgeneral}
\end{figure}

We will now develop a number of analytic tools on the finite graph approximations of $K_b$. First, we define the \textit{graph Laplacian} of a function $f$ defined on the vertices $V_\ell$ of a graph approximation $G_\ell$ given by
\begin{equation}\label{eq:graphlaplacianfinite}
    \Delta_{\ell} f(p)= \Big( \frac{1}{\deg(p)} \sum_{p \sim q} f(q) \Big) -f(p),
\end{equation}
where the sum is taken over every edge $\{p, q\} \in E_\ell$. The finite graph Laplacian has an associated energy functional
\begin{equation}\label{eq:finitegraphenergy}
    \mathcal{E}_\ell(f) = \left(\dfrac{b}{2b + 1}\right)^{-\ell}\sum_{p \sim q}\abs{f(p) - f(q)}^2
\end{equation}
known as the \textit{graph energy}. The constant $r = b/(2b + 1)$ in the above formula is known as the \textit{renormalization constant} for the bubble-diamond graphs, which is defined so that $\mathcal{E}_\ell$ satisfies the following energy-minimizing property.

\begin{proposition}\label{prop:renormalizationprop}

For any function $f$ defined on $V_\ell$ and any $\ell' \geq \ell$, we have that $\mathcal{E}_\ell(f) \leq \mathcal{E}_{\ell'}(\widetilde{f})$ for any extension $\widetilde{f}$ of $f$ to $V_{\ell'}$, i.e., for any function $\widetilde{f}$ on $V_{\ell'}$ that satisfies $\widetilde{f}|_{V_\ell} = f$.
    
\end{proposition}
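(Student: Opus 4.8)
The plan is to reduce the statement to the single-level case $\ell' = \ell + 1$ and then iterate. Indeed, suppose we can show that $\mathcal{E}_\ell(f) \leq \mathcal{E}_{\ell+1}(g)$ holds for every function $f$ on $V_\ell$ and every extension $g$ of $f$ to $V_{\ell+1}$. Then for a general extension $\widetilde{f}$ to $V_{\ell'}$ with $\ell' \geq \ell$, I would apply this one-level bound to the successive restrictions $\widetilde{f}|_{V_\ell}, \widetilde{f}|_{V_{\ell+1}}, \dots, \widetilde{f}|_{V_{\ell'}}$, obtaining the chain
\[
\mathcal{E}_\ell(f) = \mathcal{E}_\ell(\widetilde{f}|_{V_\ell}) \leq \mathcal{E}_{\ell+1}(\widetilde{f}|_{V_{\ell+1}}) \leq \cdots \leq \mathcal{E}_{\ell'}(\widetilde{f}).
\]
Since the one-level bound is uniform over the level, the full proposition then follows by induction on the gap $\ell' - \ell$, with the trivial base case $\ell' = \ell$.

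For the one-level step, the idea is to exploit the self-similar refinement of $G_{\ell+1}$ from $G_\ell$: each edge $\{p,q\} \in E_\ell$ is replaced by a bubble-diamond gadget consisting of two new vertices $u,v \in V_{\ell+1}$ together with the edge $\{p,u\}$, the $b$ parallel edges $\{u,v\}$, and the edge $\{v,q\}$. Crucially, the two new vertices of each gadget belong to that gadget alone---only the old vertices $p,q \in V_\ell$ are shared between gadgets---so the edge sum defining $\mathcal{E}_{\ell+1}$ decouples as a sum over gadgets,
\[
\sum_{p'\sim q' \text{ in } E_{\ell+1}} \abs{g(p') - g(q')}^2 = \sum_{\{p,q\}\in E_\ell} \Big( \abs{f(p) - g(u)}^2 + b\,\abs{g(u) - g(v)}^2 + \abs{g(v) - f(q)}^2 \Big),
\]
where $u,v$ denote the new vertices associated to the edge $\{p,q\}$ and we have used $g|_{V_\ell} = f$.

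The crux is then the elementary local estimate: for fixed boundary values $a = f(p)$ and $d = f(q)$, the gadget energy satisfies
\[
\abs{a - x}^2 + b\,\abs{x - y}^2 + \abs{y - d}^2 \;\geq\; \frac{b}{2b+1}\,\abs{a - d}^2 = r\,\abs{a - d}^2
\]
for all $x,y$, with equality precisely at the harmonic (energy-minimizing) choice. This is the assertion that three conductances---$1$, $b$, and $1$---placed in series have effective conductance $r = b/(2b+1)$, and I would verify it by minimizing the quadratic in $(x,y)$ (equivalently, solving the two-variable system $\partial_x = \partial_y = 0$) and substituting back. This single computation is where the precise value of the renormalization constant $r$ enters, and it is really the only content of the argument.

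To finish, I would sum the local estimate over all edges of $G_\ell$ and insert it into the gadget decomposition to obtain
\[
\sum_{p'\sim q' \text{ in } E_{\ell+1}} \abs{g(p') - g(q')}^2 \;\geq\; r \sum_{\{p,q\}\in E_\ell} \abs{f(p) - f(q)}^2.
\]
Multiplying both sides by $r^{-(\ell+1)}$ and recognizing that $r^{-(\ell+1)}\cdot r = r^{-\ell}$ yields $\mathcal{E}_{\ell+1}(g) \geq \mathcal{E}_\ell(f)$, which completes the one-level step and hence the proof. I do not anticipate a genuine obstacle: the only subtlety worth stating carefully is the disjointness of the new vertices across distinct gadgets (so that the energy truly decouples and the local minimization may be applied edge by edge), which is immediate from the refinement description $V_{\ell+1} = \bigcup_i F_i(V_\ell)$.
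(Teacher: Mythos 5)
Your proof is correct and follows essentially the same route as the paper: reduce to a single refinement step, decouple the energy over the per-edge gadgets, and verify the local series-conductance identity $1,b,1 \mapsto b/(2b+1)$ by minimizing the quadratic. The paper compresses the reduction into ``it is clear that it suffices to show this from $\ell=0$ to $\ell'=1$'' and then performs the same two-variable minimization; you have merely spelled out the decoupling and telescoping that justify that reduction.
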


\begin{proof}

From the second construction of $G_\ell$ above, it is clear that it suffices to show that this property holds from $\ell = 0$ to $\ell' = 1$. Letting $V_1 = \{q_1, p_1, p_2, q_2\}$ be the vertices of $G_1$, we compute for any function $f$ on $V_0$ and extension $\widetilde{f}$ to $V_1$ that
\begin{align*}
    \mathcal{E}_1(\widetilde{f}) &= r^{-1}\left((f(q_1) - \widetilde{f}(p_1))^2 + (f(q_2) - \widetilde{f}(p_2))^2 + b(\widetilde{f}(p_1) - \widetilde{f}(p_2))^2\right).
\end{align*}
By taking partial derivatives, we find that $\mathcal{E}_1(\widetilde{f})$ is minimized when
\begin{equation*}
    \begin{cases}
        \widetilde{f}(p_1) = \left(\dfrac{b+1}{2b+1}\right) f(q_1) + \left(\dfrac{b}{2b+1}\right) f(q_2), \\[3ex]
        \widetilde{f}(p_2) = \left(\dfrac{b}{2b+1}\right) f(q_1) + \left(\dfrac{b+1}{2b+1}\right) f(q_2),
    \end{cases} \label{eq:harmonic_ex}
\end{equation*}
which gives us exactly the minimum graph energy $\mathcal{E}_1(\widetilde{f}) = \mathcal{E}_0(f)$ as is desired.
\end{proof}

We further note that the graph energy induces a bilinear form through the polarization identity
\begin{equation} \label{eq:polarizationidentity}
    \mathcal{E}_\ell(f, g) = \dfrac{1}{4}\left(\mathcal{E}_\ell(f + g) - \mathcal{E}_\ell(f - g)\right).    
\end{equation}
To see how the graph Laplacian $\Delta_\ell$ and graph energy $\mathcal{E}_\ell$ on the finite bubble-diamond graphs are associated, we will turn to the Gauss-Green formula in the following subsection.

Lastly, we will construct a function $G_\ell$ on $V_\ell \times V_\ell$ that will serve as an analogue of \textit{Green's function} in classical calculus. To this end, we will define
\begin{equation}\label{eq:greensfunctionfinite}
    G_\ell(p,q) = \sum_{m=0}^\ell \sum_{x,y \in V_{m+1} / V_m} g(x,y)\psi_x^{(m+1)}(p) \psi_{y}^{(m+1)}(q),
\end{equation}
where $g(x,x) = \alpha r^m$ and $g(x,y) = \beta r^m$ for $x \neq y$ with constants $\alpha = r^2(b+1)/b$ and $\beta = r^2$, and where $\psi_x^{(m)}$ is the \textit{piecewise harmonic function} on $V_m$ defined by $\psi_x^{(m)}(p)=\delta_{xp}$. Again, to see why $G_\ell$ is an analogue of Green's function on $V_\ell$, we will turn to solving the Poisson equation in the following subsection.

\subsection{Bubble-Diamond Fractals}

We will now construct the bubble-diamond fractals from the above finite graph approximations $G_\ell$. First, we will define the \textit{graph approximation} 
\begin{equation}
    V_* = \bigcup_{\ell \geq 0}V_\ell.
\end{equation}
We note that we can extend in a natural way the maps $F_i : V_\ell \to V_{\ell + 1}$ that define the finite bubble-diamond graphs to maps $F_i : V_* \to V_*$ on the graph approximation. We will now extend the graph energy as defined in~\eqref{eq:finitegraphenergy} to the graph approximation in the following way.

\begin{definition}

The \textit{graph energy} of a function $f$ defined on $V_*$ is defined to be
\begin{equation} \label{eq:graphenergy}
    \mathcal{E}(f) = \lim_{\ell \to \infty} \mathcal{E}_\ell(f|_{V_\ell}).
\end{equation}
From this, we will define the \textit{resistance metric} $R(p,q)$ between points $p,q \in V_*$ to be the minimum value of $R$ such that
\begin{equation} \label{eq:resistancemetric}
    \abs{f(p) - f(q)}^2 \leq R \mathcal{E}(f)
\end{equation}
for all $f \in \text{dom}\,\mathcal{E}$, i.e., for all functions $f$ on $V_*$ such that $\mathcal{E}(f) < \infty$.

As with above, we extend the graph energy on $V_*$ to a bilinear form $\mathcal{E}(f,g)$ through the polarization identity~\eqref{eq:polarizationidentity}. Both the graph energy and resistance metric play an important role for the graph approximation, as shown in the following result. 

\begin{proposition} 

The graph energy $\mathcal{E}$ is an inner product on $\text{dom}\,\mathcal{E} / (\text{constant functions})$. Furthermore, the resistance metric is a metric on $V_*$.

\end{proposition}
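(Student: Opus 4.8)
The plan is to treat the two assertions separately, dispatching the inner-product claim first by formal means and then reducing the metric claim to the classical theory of finite electrical networks, where the only genuinely nontrivial input appears.

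For the inner-product claim I would first note that $\mathcal{E}$ is well defined as a $[0,\infty]$-valued quadratic form. By Proposition~\ref{prop:renormalizationprop}, applied to $f|_{V_\ell}$ with its extension $f|_{V_{\ell'}}$, the sequence $\mathcal{E}_\ell(f|_{V_\ell})$ is non-decreasing in $\ell$, so the limit in~\eqref{eq:graphenergy} exists and equals $\sup_\ell \mathcal{E}_\ell(f|_{V_\ell})$. The elementary finite-level estimate $\mathcal{E}_\ell(f+g)\leq 2\mathcal{E}_\ell(f)+2\mathcal{E}_\ell(g)$ shows that $\text{dom}\,\mathcal{E}$ is a vector space, and since each $\mathcal{E}_\ell$ obeys the parallelogram law the same identity passes to the limit; polarization~\eqref{eq:polarizationidentity} then produces a symmetric bilinear form $\mathcal{E}(f,g)$, with positivity $\mathcal{E}(f)\geq 0$ immediate. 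The only substantive point is definiteness modulo constants: if $\mathcal{E}(f)=0$ then, since the limit is a supremum of nonnegative terms, every $\mathcal{E}_\ell(f|_{V_\ell})=0$, and as each $G_\ell$ is connected the vanishing of $r^{-\ell}\sum_{p\sim q}\abs{f(p)-f(q)}^2$ forces $f$ to be constant on $V_\ell$; the nested structure $V_\ell\subset V_{\ell+1}$ makes these constants agree, so $f$ is constant on $V_*$. Conversely constants have zero energy, so $\mathcal{E}$ descends to an inner product on $\text{dom}\,\mathcal{E}/(\text{constant functions})$.

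For the resistance metric, the key reduction is to show that for $p,q\in V_\ell$ the quantity $R(p,q)$ defined through the full energy coincides with the finite effective resistance $R_\ell(p,q)=\sup\{\abs{g(p)-g(q)}^2/\mathcal{E}_\ell(g)\}$ on $G_\ell$. One inequality follows from $\mathcal{E}(f)\geq \mathcal{E}_\ell(f|_{V_\ell})$ together with $f(p)-f(q)=f|_{V_\ell}(p)-f|_{V_\ell}(q)$ (the degenerate case $\mathcal{E}_\ell(f|_{V_\ell})=0$ forcing $f(p)=f(q)$); for the reverse inequality I would extend any $g$ on $V_\ell$ to its harmonic extension $\widetilde{g}$ on $V_*$, for which iterating Proposition~\ref{prop:renormalizationprop} gives $\mathcal{E}(\widetilde{g})=\mathcal{E}_\ell(g)$, so the optimizing ratios match and $\widetilde{g}\in\text{dom}\,\mathcal{E}$. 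This simultaneously shows $R(p,q)$ is finite and independent of the level $\ell$ containing $p,q$. Symmetry is immediate, and $R(p,q)=0\iff p=q$ follows because a harmonically extended function separating $p\neq q$ has finite energy and positive ratio.

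The main obstacle is the triangle inequality, and this is exactly where the finite reduction pays off: given $p,q,r\in V_*$, choose $\ell$ with $p,q,r\in V_\ell$, so that $R=R_\ell$ on this triple, and invoke the classical fact that the effective resistance of a finite weighted graph is a metric. I expect this classical statement to be the one nontrivial ingredient; a self-contained argument would either cite the electrical-network literature or deduce $R_\ell(p,r)\leq R_\ell(p,q)+R_\ell(q,r)$ from the series law applied to the energy-minimizing harmonic potentials that realize each pairwise resistance, the harmonic minimizers being furnished by the same computation underlying Proposition~\ref{prop:renormalizationprop}.
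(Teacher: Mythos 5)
Your proposal is correct in substance but takes a genuinely different route from the paper on the part that matters, namely the triangle inequality. The inner-product half is essentially the paper's argument (monotonicity of $\mathcal{E}_\ell$ from Proposition~\ref{prop:renormalizationprop}, plus connectedness of each $G_\ell$ forcing constancy), though you are more careful than the paper about why the bilinear form is well defined in the first place. For the metric half, the paper argues directly: it reduces to triples $p\sim q\sim z$ of adjacent vertices, works out the level-$1$ case explicitly by exhibiting test functions giving the lower bounds $R(p,q)\geq r$ and $R(q,z)\geq r/b$ and the upper bound $R(p,z)\leq (1+\tfrac1b)r$, and asserts that the general case follows from the harmonic extension algorithm. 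You instead prove the identity $R(p,q)=R_\ell(p,q)$ for $p,q\in V_\ell$ (the harmonic-extension argument for the reverse inequality is exactly right, since iterating Proposition~\ref{prop:renormalizationprop} gives $\mathcal{E}(\widetilde g)=\mathcal{E}_\ell(g)$) and then appeal to the classical fact that effective resistance on a finite connected weighted graph is a metric. This buys generality --- it handles arbitrary triples $p,q,z$, not just adjacent ones, which is a point the paper's reduction leaves somewhat implicit --- at the cost of importing one external result. That result is genuinely classical and citable (Kigami's \emph{Analysis on Fractals}, or Doyle--Snell), so the appeal is legitimate; but your parenthetical sketch of how to prove it is the one soft spot: the ``series law'' only gives the triangle inequality when $q$ separates $p$ from $z$, and the standard self-contained argument instead superposes the unit-current potentials for the pairs $(p,q)$ and $(q,z)$ and uses the maximum principle. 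If you intend the argument to be self-contained rather than a citation, replace the series-law remark with that superposition argument.
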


\begin{proof}

It is clear that $\mathcal{E}$ is symmetric, linear in both arguments, and satisfies $\mathcal{E}(f) \geq 0$, so it remains to show that $\mathcal{E}(f) = 0$ if and only if $f$ is constant. We recall by Proposition~\ref{prop:renormalizationprop} that $\mathcal{E}_\ell$ is nondecreasing as $\ell \to \infty$, and so we have that
\begin{equation*}
    \mathcal{E}(f) = 0 \Leftrightarrow \mathcal{E}_\ell(f) = 0\text{ for all }\ell \in \mathbb{N}.
\end{equation*}
It is clear from~\eqref{eq:finitegraphenergy} that $\mathcal{E}_\ell(f) = 0$ if and only if $f|_{V_\ell}$ is constant, and so $\mathcal{E}(f) = 0$ if and only if $f$ is constant on $V_*$ as is desired.

Next, it is clear that $R$ is symmetric and positive-definite, so it remains to show that $R$ satisfies the triangle inequality. To this end, we will consider distinct $p,q,z \in V_*$, and we note that there is some $\ell \in \mathbb{N}$ such that $p,q,z \in V_\ell$. It suffices to consider the case that $p \sim q$ and $q \sim z$ in $V_\ell$. We will show the result when $\ell = 1$, where we will let $V_1 = \{q_1, p_1, p_2, q_2\}$ be the vertices of $G_1$, and we note that the result generalizes for $\ell > 1$ by the energy-minimizing algorithm in Proposition~\ref{prop:renormalizationprop}. We will assume without a loss of generality that $(p, q, z) = (q_1, p_1, p_2)$. Recall that $r=b/(2b+1)$. First, we have from~\eqref{eq:finitegraphenergy} for any function $f \in \text{dom}\,\mathcal{E}$ that
\begin{align*}
    r\mathcal{E}(f) \geq r\mathcal{E}_1(f) &\geq \abs{f(p) - f(q)}^2 + b\abs{f(q) - f(z)}^2  \\
    &\geq \left(\dfrac{b}{b+1}\right)^2 \abs{f(p) - f(z)}^2 + b\left(\dfrac{1}{b+1}\right)^2 \abs{f(p) - f(z)}^2 \\
    &\geq \left(1 + \dfrac{1}{b}\right)^{-1} \abs{f(p) - f(z)}^2
\end{align*}
by taking the energy-minimizing value of $f(q) = 1/(b+1)f(p) + b/(b+1)f(z)$. Then, we will consider the function $f_p \in \text{dom}\, \mathcal{E}$ defined by extending
\begin{align*}
    f_p|_{V_1}(q_1) = 1, \quad f_p|_{V_1}(p_1) = f_p|_{V_1}(p_2) = f_p|_{V_1}(q_2) = 0
\end{align*}
to $V_*$ by the energy-minimizing algorithm in Proposition~\ref{prop:renormalizationprop}. We compute that $\mathcal{E}(f_p) = \mathcal{E}_1(f_p|_{V_1}) = r^{-1}$ and that $|f_p(p) - f_p(q)|^2 = 1$, and so $R(p, q) \geq r$. Similarly, by extending the function $f_z \in \text{dom}\, \mathcal{E}$ defined on $V_1$ by
\begin{align*}
    f_z|_{V_1}(q_1) = f_z|_{V_1}(p_1) = 0, \quad f_z|_{V_1}(p_2) = f_z|_{V_1}(q_2) = 1,
\end{align*}
we compute that $\mathcal{E}(f_z) = \mathcal{E}_1(f_z|_{V_1}) = br^{-1}$ and that $|f_z(q) - f_z(z)| = 1$, and so $R(q,r) \geq r/b$. Putting this together, we have for any $f \in \text{dom}\, \mathcal{E}$ that
\begin{equation*}
    \abs{f(p) - f(z)}^2 \leq \left(1 + \dfrac{1}{b}\right) r\mathcal{E}(f) \leq \left(R(p, q) + R(q, z)\right)\mathcal{E}(f),
\end{equation*}
and so we have that $R(p, z) \leq R(p, q) + R(q, z)$ with $R$ satisfying the triangle inequality as is desired.
\end{proof}

\end{definition}

We quickly remark that we can show using the above result that $\text{dom}\,\mathcal{E} / (\text{constant functions})$ is a Hilbert space with respect to $\mathcal{E}$. With the above result, we can finally construct the bubble-diamond fractals as the completion of the graph approximation. 

\begin{definition} \label{def:bdf}

The \textit{bubble-diamond fractal $K_b$ with branching parameter $b$} is the $R$-completion of $V_{\ast}$.

\end{definition}

Similar to above, we can equivalently construct the bubble-diamond fractals as the unique compact set $K_b$ that satisfies
\begin{equation}
    K_b = \bigcup_{i \in \{1, 2, \dots, b + 2\}} F_i(K_b).
\end{equation}
We will now extend the analytic tools from above to $K_b$. First, we will inductively define a \textit{self-similar measure} $\mu$ on $K_b$ given by $\mu(K_b) = 1$ and
\begin{equation}
    \mu(A) = \left(\dfrac{1}{b+2}\right) \sum_{i \in \{1, 2, \dots, b+2\}} \mu(F^{-1}_i A).
\end{equation}
This scaling condition uniquely defines $\mu$ on $K_b$ \cite[Section 1.2]{BobBook2006}. With this measure, we can define the Laplacian on $K_b$.

\begin{definition}\label{def:Lap} Let $u \in \text{dom}\,\mathcal{E}$ and $f$ a continuous on $K_b$. We say that $u \in \text{dom}\,{\Delta}$ with $\Delta u=f$ if 
\begin{equation}
    \mathcal{E}(u, g) = - \int_{K_b}  f gd\mu
\end{equation}
for all $g \in \text{dom}\,\mathcal{E}$ with boundary conditions $g(q_1) = g(q_2) = 0$.
\end{definition}

The connection between our formulation of the Laplacian $\Delta$ on $K_b$ and the graph Laplacian $\Delta_\ell$ on $V_\ell$ as defined in~\eqref{eq:graphlaplacianfinite} is made clear in the following \textit{pointwise formula}.

\begin{proposition}\label{prop:laplacianscaling}

For any $f \in \text{dom}\, \Delta$ and $p \in V_* \setminus V_0$, we have that
\begin{equation}\label{eq:lappt}
    \Delta f(p)= \left(\dfrac{2}{b+1}\right)\lim_{m\to\infty} \left(\dfrac{r}{b+2}\right)^{-m}\Delta_m f(p).
\end{equation}
Conversely, suppose $f$ is continuous on $K_b$ and the right side of~\eqref{eq:lappt} converges uniformly to a continuous function on $V_*\setminus V_0$. Then, $u\in \text{dom}\, \Delta$ and~\eqref{eq:lappt} holds. 

Furthermore, the Laplacian satisfies the following scaling identity
\begin{equation}\label{eq:scalingLap}
\Delta (f \circ F_i) = \left(\dfrac{r}{b+2}\right)  (\Delta f \circ F_i).
\end{equation}

\end{proposition}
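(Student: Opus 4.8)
The plan is to derive all three claims from the weak definition of $\Delta$ in Definition~\ref{def:Lap} together with a discrete Gauss--Green formula on the approximating graphs $G_m$, using the piecewise harmonic functions $\psi_p^{(m)}$ as localized test functions. Throughout, write $u$ for the function in question and recall that its Laplacian $\Delta u$ is continuous on $K_b$ by Definition~\ref{def:Lap}.

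First I would establish the summation-by-parts identity $\mathcal{E}_m(v,g)=-r^{-m}\sum_{p\in V_m}\deg(p)\,g(p)\,\Delta_m v(p)$ for functions on $V_m$, which follows by expanding~\eqref{eq:finitegraphenergy} edge by edge and regrouping the terms according to each vertex. Fixing an interior point $p\in V_\ell\setminus V_0$ and taking $g=\psi_p^{(m)}$ for $m\ge\ell$, which vanishes on $V_0$ and is therefore an admissible test function, this collapses to a single term in $\Delta_m u(p)$. The key structural input is that $\psi_p^{(m)}$ is harmonic on every $m$-cell, so by the energy-minimizing property of Proposition~\ref{prop:renormalizationprop} (harmonic functions are $\mathcal{E}$-orthogonal to functions vanishing on $V_m$) one has $\mathcal{E}(u,\psi_p^{(m)})=\mathcal{E}_m(u|_{V_m},\psi_p^{(m)})$; hence the left-hand side of the weak equation equals a constant multiple of $r^{-m}\Delta_m u(p)$.

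For the right-hand side I would analyze $\int_{K_b}(\Delta u)\,\psi_p^{(m)}\,d\mu$. Since $\psi_p^{(m)}$ is supported on the $m$-cells meeting $p$, of which there are $b+1$, and since a harmonic function on a single cell integrates against $\mu$ to the average of its two boundary values by the self-similarity of $\mu$, one computes $\int_{K_b}\psi_p^{(m)}\,d\mu=\tfrac{b+1}{2}(b+2)^{-m}$. Using the continuity of $\Delta u$ and the fact that the support of $\psi_p^{(m)}$ shrinks to $p$, the integral equals $\Delta u(p)\int_{K_b}\psi_p^{(m)}\,d\mu\,(1+o(1))$ as $m\to\infty$. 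Equating the two sides of the weak equation, dividing out the common geometric factors, and isolating $\Delta_m u(p)$ identifies $\lim_{m}\big(\tfrac{r}{b+2}\big)^{-m}\Delta_m u(p)$ with a fixed multiple of $\Delta u(p)$, namely the constant appearing in~\eqref{eq:lappt}: the renormalization constant $r$ and the measure ratio $b+2$ are exactly what make the limit converge, while the vertex degree and the number $b+1$ of cells meeting at $p$ combine to produce the prefactor. The converse runs the same identity backwards: given that the rescaled $\Delta_m u(p)$ converge uniformly to a continuous limit, one verifies the weak equation first on each $\psi_p^{(m)}$ and then, by linearity and the density of piecewise harmonic functions vanishing on $V_0$ in $\{g\in\mathrm{dom}\,\mathcal{E}:g|_{V_0}=0\}$, on all admissible test functions, the uniform convergence justifying the passage to the limit inside the integral.

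Finally, the scaling identity~\eqref{eq:scalingLap} follows from~\eqref{eq:lappt} and the locality of $\Delta_m$. Because $F_i$ embeds $G_m$ isomorphically onto the subgraph of $G_{m+1}$ filling the $i$-th cell and preserves vertex degrees, for interior $p$ the neighbors of $F_i(p)$ in $G_{m+1}$ are precisely the $F_i$-images of the neighbors of $p$, giving $\Delta_m(f\circ F_i)(p)=\Delta_{m+1}f(F_i(p))$. Substituting this into~\eqref{eq:lappt} and reindexing $m\mapsto m-1$ produces exactly one extra factor of $r/(b+2)$, which is the claim. I expect the main obstacle to be the analytic core of the first part: making the concentration estimate $\int_{K_b}(\Delta u)\psi_p^{(m)}\,d\mu=\Delta u(p)\int_{K_b}\psi_p^{(m)}\,d\mu\,(1+o(1))$ uniform in $p$, and justifying the identity $\mathcal{E}(u,\psi_p^{(m)})=\mathcal{E}_m(u|_{V_m},\psi_p^{(m)})$ from the energy-minimization property; the converse then hinges on upgrading these to statements valid uniformly in $m$, so that limits may be exchanged with the integral.
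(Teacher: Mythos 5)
Your proposal follows, in essence, the same route as the paper: the paper's entire proof consists of citing \cite[Theorem 2.2.1]{BobBook2006} for the weak-to-pointwise equivalence and then computing $\int_{K_b}\psi_p^{(m)}\,d\mu=\tfrac{b+1}{2}(b+2)^{-m}$ (each interior vertex lies in $b+1$ of the $(b+2)^m$ many $m$-cells, and the two tent functions on a cell sum to the constant $1$), which is exactly the integral you compute. What you do differently is unpack that citation into a self-contained argument --- summation by parts on $G_m$, the identity $\mathcal{E}(u,\psi_p^{(m)})=\mathcal{E}_m(u,\psi_p^{(m)})$ from $m$-harmonicity of the test functions, and the concentration of $\psi_p^{(m)}$ at $p$ --- and to supply the locality argument $\Delta_m(f\circ F_i)(p)=\Delta_{m+1}f(F_ip)$ for the scaling identity, which the paper dismisses as immediate. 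All of this is structurally correct, and the gaps you flag yourself (uniformity of the concentration estimate, density of piecewise harmonic test functions for the converse) are standard and fillable.

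The one step that does not survive scrutiny is your assertion that ``the vertex degree and the number $b+1$ of cells meeting at $p$ combine to produce the prefactor'' in~\eqref{eq:lappt}. Carry your own bookkeeping through: the energy side gives $\mathcal{E}_m(u,\psi_p^{(m)})=-r^{-m}\deg(p)\,\Delta_m u(p)$ with $\deg(p)=b+1$, while the measure side gives $-\Delta u(p)\cdot\tfrac{b+1}{2}(b+2)^{-m}(1+o(1))$; the two factors of $b+1$ \emph{cancel} rather than combine, and the prefactor in front of $\lim_m (r/(b+2))^{-m}\Delta_m u(p)$ comes out as $2$, not $2/(b+1)$. The constant $2/(b+1)$ is correct only if $\Delta_m$ denotes the unnormalized graph Laplacian $\sum_{q\sim p}\bigl(f(q)-f(p)\bigr)$, as in the theorem the paper cites, whereas the paper's~\eqref{eq:graphlaplacianfinite} normalizes by $\deg(p)$; the $b=1$ sanity check against the second derivative on the unit interval confirms that the normalized convention forces the constant $2$. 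So this is a normalization mismatch inherited from the statement itself rather than a flaw in your method --- but a proof that claims to ``recover the constant appearing in~\eqref{eq:lappt}'' without doing this arithmetic is papering over exactly the point where your self-contained approach has something to add.
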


\begin{proof}

By \cite[Theorem 2.2.1]{BobBook2006}, it suffices to compute the integral over $K_b$ of the piecewise harmonic function $\psi_x^{(m)}$ on $V_m$ defined by $\psi_x^{(m)}(p) = \delta_{xp}$. First, we note that each $p \in V_m$ is an element of $(b+1)$ many \textit{$m$-cells} $F(K_b) = (F_{i_1} \circ \dots \circ F_{i_m})(K_b)$. On each $m$-cell $F(K_b)$ with boundary points $\{F(q_1), F(q_2)\}$, we note, since there are $(b+2)^m$ many $m$-cells in $K_b$, that
\begin{align*}
    \sum_{q \in V_*} \int_{K_b} \psi_q^{(m)}d\mu = \int_{K_b}d\mu = 1 &\Rightarrow \int_{F(K_b)} \left(\psi_{F(q_1)}^{(m)} + \psi_{F(q_2)}^{(m)}\right)d\mu = \dfrac{1}{(b+2)^m} \\
    &\Rightarrow \int_{F(K_b)} \psi^{(m)}_x dm = \dfrac{1}{2(b+2)^m}.
\end{align*}
By summing the above integral over the $(b+1)$ many $m$-cells containing $x \in V_m$ to compute the integral over $K_b$, we recover the above formula as is desired. The scaling identity is then an immediate consequence of the pointwise formula.
\end{proof}

Now that we have defined $\Delta$ on $K_b$, we can return to establish the correspondence between $\Delta$ and $\mathcal{E}$ that was alluded to in the above subsection. To do so, we will first define the normal derivative of a function $f$ on $K_b$ at a boundary point $q \in V_0$.

\begin{definition}

The \textit{normal derivative} of a function $f$ on $K_b$ at a boundary point $q \in V_0$ is given by
\begin{align}
\partial_n f(q) = \lim_{m \to \infty} r^{-m} \big( f ( q_i) - f\big(x_m)\big),
\end{align}
where $x_m = (F_{b + 1} \circ \dots \circ F_{b+1})(q_2)$ if $q = q_1 \in V_0$ and $x_m = (F_{b + 2} \circ \dots \circ F_{b+2})(q_1)$ if $q = q_2 \in V_0$, both with $m$ applications of $F_i$. In particular, the normal derivative exists for all $u \in \text{dom}\, \Delta$ \cite[Theorem 2.3.2]{BobBook2006}.

\end{definition}

Finally, with these definition of the graph energy, Laplacian, and normal derivative on $K_b$, we can finally state the Gauss-Green formula that will be essential in our development of a theory of polynomials on $K_b$.

\begin{proposition} \textup{\cite[Theorem 2.3.2]{BobBook2006}} For any $u \in \text{dom}\, \Delta$, we have that
\begin{equation}
    \mathcal{E}(u,g) = - \int_{K_b} (\Delta u)g d \mu + \sum_{V_0} g( \partial_n u)
\end{equation}
holds for all $v \in \text{dom}\, \mathcal{E}$.
\end{proposition}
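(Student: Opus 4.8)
The plan is to prove the formula first at each finite level $\ell$ by discrete summation by parts, and then pass to the limit $\ell \to \infty$, exploiting that $\mathcal{E}(u,g) = \lim_{\ell} \mathcal{E}_\ell(u|_{V_\ell}, g|_{V_\ell})$ holds by the very definition of the graph energy. The first step is the discrete Gauss--Green identity: writing $\mathcal{E}_\ell(u,g)$ as a sum over the edges of $G_\ell$ and collecting the terms incident to each vertex, a one-line summation by parts together with the relation $\deg(p)\,\Delta_\ell u(p) = \sum_{q \sim p}(u(q) - u(p))$ gives
\begin{equation*}
    \mathcal{E}_\ell(u, g) = -r^{-\ell}\sum_{p \in V_\ell} \deg(p)\,\Delta_\ell u(p)\,g(p).
\end{equation*}
Since every boundary vertex $q \in V_0$ has degree $1$ while every vertex of $V_\ell \setminus V_0$ has degree $b+1$, I would split this into a boundary sum over $V_0$ and an interior sum over $V_\ell \setminus V_0$ and treat the two resulting limits separately.

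For the boundary sum, I would observe that for $q \in V_0$ the unique $G_\ell$-neighbor of $q$ is exactly the point $x_\ell$ appearing in the definition of the normal derivative (it is the far corner of the single smallest cell containing $q$). Hence $-r^{-\ell}\deg(q)\,\Delta_\ell u(q) = r^{-\ell}\bigl(u(q) - u(x_\ell)\bigr)$, which converges to $\partial_n u(q)$ as $\ell \to \infty$. Because $g$ is continuous on $K_b$ (finite energy forces $g$ to be $\tfrac12$-Hölder in the resistance metric by~\eqref{eq:resistancemetric}), the boundary portion converges to $\sum_{V_0} g(q)\,\partial_n u(q)$, which is precisely the boundary term in the desired identity.

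The principal work, and the step I expect to be the main obstacle, is to show that the interior sum converges to $-\int_{K_b}(\Delta u)g\,d\mu$; the idea is to recognize it as a Riemann sum. Using the pointwise formula of Proposition~\ref{prop:laplacianscaling} to replace $r^{-\ell}\deg(p)\,\Delta_\ell u(p)$ by its limiting multiple of $\Delta u(p)$, and using the cell-integral $\int_{K_b}\psi_p^{(\ell)}\,d\mu$ computed in the proof of that proposition as the mass attached to each interior vertex, the interior sum matches $\sum_{p \in V_\ell \setminus V_0} g(p)\,\Delta u(p)\int_{K_b}\psi_p^{(\ell)}\,d\mu$. Since $\sum_{p \in V_\ell}\psi_p^{(\ell)} \equiv 1$, this last expression equals $\int_{K_b} I_\ell(g\,\Delta u)\,d\mu$, where $I_\ell$ denotes piecewise-harmonic interpolation at level $\ell$, and it converges to $\int_{K_b}(\Delta u)g\,d\mu$ by uniform convergence of $I_\ell(g\,\Delta u)$ for the continuous integrand $g\,\Delta u$ (the boundary vertices may be freely adjoined since their mass $\int_{K_b}\psi_q^{(\ell)}\,d\mu$ tends to $0$). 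Making this rigorous requires that the convergence in the pointwise formula be uniform over $V_\ell \setminus V_0$ and that the interpolation error be controlled uniformly, which is exactly where the continuity of $\Delta u$ and the uniform-convergence hypothesis built into $\text{dom}\,\Delta$ are used; the renormalization constant $r$ and the cell measure $(b+2)^{-\ell}$ are calibrated precisely so that the two competing scaling factors cancel and the limit is the stated integral.

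Finally, I would assemble the three pieces: the left-hand side tends to $\mathcal{E}(u,g)$ by definition of $\mathcal{E}$, the boundary sum to $\sum_{V_0} g\,\partial_n u$, and the interior sum to $-\int_{K_b}(\Delta u)g\,d\mu$, which together yield the Gauss--Green formula. No separate density argument in $g$ is required, since every $g \in \text{dom}\,\mathcal{E}$ is continuous on $K_b$ and the identity $\mathcal{E}(u,g) = \lim_\ell \mathcal{E}_\ell(u|_{V_\ell}, g|_{V_\ell})$ is available directly.
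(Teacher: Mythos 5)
The paper offers no proof of this proposition at all---it is quoted verbatim from the cited reference \cite[Theorem 2.3.2]{BobBook2006}---so there is no in-paper argument to compare against. Your outline is exactly the standard proof from that source: discrete summation by parts at level $\ell$, split into a degree-$1$ boundary sum (which produces the normal derivatives) and a degree-$(b+1)$ interior sum (which is a Riemann sum for $-\int_{K_b}(\Delta u)g\,d\mu$ against the weights $\int_{K_b}\psi_p^{(\ell)}\,d\mu$), followed by a passage to the limit using $\mathcal{E}(u,g)=\lim_\ell \mathcal{E}_\ell(u,g)$. The structure is sound, and your identification of the unique $G_\ell$-neighbor of $q_1$ with the point $x_\ell$ in the definition of $\partial_n$ is correct.

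One bookkeeping caveat in the interior step. With the \emph{normalized} graph Laplacian of \eqref{eq:graphlaplacianfinite}, the exact cancellation $r^{-\ell}\deg(p)\Delta_\ell u(p)\approx \Delta u(p)\int_{K_b}\psi_p^{(\ell)}\,d\mu$ requires the pointwise formula to carry the constant $2$, whereas \eqref{eq:lappt} as printed carries $2/(b+1)$; the printed constant is instead consistent with the \emph{unnormalized} difference operator $\sum_{q\sim p}(u(q)-u(p))=\deg(p)\Delta_\ell u(p)$. If you take \eqref{eq:lappt} literally together with \eqref{eq:graphlaplacianfinite}, your interior sum comes out as $-(b+1)\int_{K_b}(\Delta u)g\,d\mu$ rather than $-\int_{K_b}(\Delta u)g\,d\mu$. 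This is a normalization ambiguity in the paper rather than a flaw in your method, but you should fix one convention and track the factor of $\deg(p)=b+1$ through the ``calibration'' sentence explicitly.
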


The last analytic tool that we will develop on $K_b$ is finishing our construction of Green's formula. We will extend the construction from~\eqref{eq:greensfunctionfinite} of Green's formula on $V_\ell \times V_\ell$ to $K_b \times K_b$ in the following way.

\begin{definition}\label{def:greensfunction}

We define \textit{Green's function} on $V_* \times V_*$ to be
\begin{equation}
    G(p,q) = \lim_{\ell \to \infty} G_\ell(p,q),
\end{equation}
which we then continuously extend to be a function on $K_b \times K_b$.

\end{definition}

This construction of Green's function will allow us to solve the \textit{Dirichlet problem} $\Delta u = f$ with the boundary conditions $u(q_1) = u(q_2) = 0$ for any continuous function $f$ on $K_b$, or, equivalently, to show that the restriction of the Laplacian $\Delta$ to the domain $u \in \text{dom}\, \Delta$ with $u(q_1) = u(q_2) = 0$ is invertible.

\begin{proposition} \textup{\cite[Theorem 2.6.1]{BobBook2006}} The Dirichlet problem $\Delta u = f$ with $u(q_1) = u(q_2) = 0$ has a unique solution $u \in \text{dom}\, \Delta$ for any continuous function $f$ on $K_b$ given by 
\begin{equation}
    u(p) = -\int_{K_b} G(p, q)f(q) d\mu(q).
\end{equation}

\end{proposition}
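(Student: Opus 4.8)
The plan is to split the claim into uniqueness and existence, dispatching uniqueness by a short energy argument and reserving the real work for verifying that the explicit formula solves the problem. For uniqueness, suppose $u_1$ and $u_2$ both solve the Dirichlet problem and set $w = u_1 - u_2$. Then $w|_{V_0} = 0$ and, by Definition~\ref{def:Lap}, $\mathcal{E}(w, g) = 0$ for every $g \in \text{dom}\,\mathcal{E}$ vanishing on $V_0$. Since $w$ itself vanishes on $V_0$, taking $g = w$ gives $\mathcal{E}(w) = 0$; because $\mathcal{E}$ is an inner product on $\text{dom}\,\mathcal{E}/(\text{constants})$, this forces $w$ to be constant, and $w|_{V_0} = 0$ then gives $w \equiv 0$.

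For existence, I would define $u(p) = -\int_{K_b} G(p,q) f(q)\, d\mu(q)$ and verify the weak characterization $\mathcal{E}(u,g) = -\int_{K_b} fg\, d\mu$ for all $g \in \text{dom}\,\mathcal{E}$ with $g|_{V_0} = 0$, which by Definition~\ref{def:Lap} is exactly the assertion $u \in \text{dom}\,\Delta$ with $\Delta u = f$. The boundary condition comes for free: each $\psi_x^{(m+1)}$ in~\eqref{eq:greensfunctionfinite} is supported on the new vertices $V_{m+1}\setminus V_m$, so $G(q_i, q) = 0$ for $q_i \in V_0$ and hence $u(q_i) = 0$. The crux is therefore the reproducing identity $\mathcal{E}(G(\cdot, q), g) = g(q)$; granting it, I would interchange the energy pairing with the integral to obtain $\mathcal{E}(u, g) = -\int_{K_b} \mathcal{E}(G(\cdot,q),g) f(q)\, d\mu(q) = -\int_{K_b} g(q) f(q)\, d\mu(q)$, as needed.

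To establish the reproducing identity I would build from the finite-level construction~\eqref{eq:greensfunctionfinite} upward. First I would treat the base case on $G_1$ as a finite linear-algebra computation: solving $\Delta_1 u = f$ on the interior vertices $\{p_1, p_2\}$ with $u|_{V_0}=0$ and inverting the interior graph Laplacian pins down the single-cell coefficients, which must match $\alpha = r^2(b+1)/b$ and $\beta = r^2$, with the harmonic-extension formulas from Proposition~\ref{prop:renormalizationprop} supplying the requisite values. Then I would propagate this across levels by self-similarity: the energy scaling under the maps $F_i$, the measure factor $(b+2)^{-m}$, and the Laplacian scaling~\eqref{eq:scalingLap} are precisely matched by the $r^m$ weights in $g(x,y)$, so the telescoping sum over $m$ in~\eqref{eq:greensfunctionfinite} reproduces $\Delta_\ell$ at every level. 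Finally, since $r = b/(2b+1) < 1$, the terms decay geometrically, the series defining $G$ converges absolutely and uniformly, and this both legitimizes the continuous extension to $K_b \times K_b$ and the Fubini interchange above.

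The main obstacle I anticipate is the level-propagation step: verifying that the cross-level contributions in~\eqref{eq:greensfunctionfinite} telescope so that $\mathcal{E}_\ell(G_\ell(\cdot, q), g) \to g(q)$ with no leftover boundary contribution. This requires tracking the Gauss-Green boundary terms on each cell and confirming that interior matching forces their cancellation, and then checking that the discrete reproducing identity survives the limit $\ell \to \infty$ — where the monotonicity of $\mathcal{E}_\ell$ from Proposition~\ref{prop:renormalizationprop} and the uniform convergence of $G_\ell$ do the essential work. Since this is the standard self-similar Green's function machinery, I would ultimately invoke \cite[Theorem 2.6.1]{BobBook2006} for the general argument, with the only bubble-diamond-specific input being the explicit constants $\alpha$ and $\beta$ verified in the base case.
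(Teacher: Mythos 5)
The paper gives no proof of this proposition --- it is quoted verbatim from \cite[Theorem 2.6.1]{BobBook2006} --- so there is no in-paper argument to compare against line by line. Your outline (uniqueness via the energy argument $\mathcal{E}(w)=0\Rightarrow w$ constant $\Rightarrow w\equiv 0$, existence via the reproducing identity $\mathcal{E}(G(\cdot,q),g)=g(q)$ assembled from the finite-level kernels, with the boundary condition $u(q_i)=0$ following from the support of the $\psi_x^{(m+1)}$ away from $V_0$) is precisely the standard argument of the cited reference and is correct in outline; the constants check out in the $b=1$ case, where $\alpha=2/9$ and $\beta=1/9$ reproduce the classical Green's function $x(1-y)$ on the interval.
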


One important observation that we will make here is that, when $b=1$, the bubble-diamond fractal $K_b$ reduces to the unit interval. In this case, we have further that the Laplacian, normal derivative, and Green's function respectively reduce to the second derivative, first derivative, and Green's function from classical calculus. This will continue for the rest of the paper, where our construction of the monomials and the Legendre polynomials will reduce down similarly to the classical case.

\section{Polynomials on the Bubble-Diamond Fractals}\label{sec:Poly}

We now define and investigate the class of polynomials on $K_b$. As motivation, we first define and investigate the properties of harmonic functions, i.e., polynomials of degree zero as are defined below. To construct these harmonic functions, we propose an extension algorithm. Subsequently, we use an approach developed in \cite{splinesBob} to construct higher-order polynomials. In particular, as seen below, a \textit{polynomial} on $K_b$ can be viewed as a multiharmonic function.

\begin{definition}\label{def:poly} 

For each $j \geq 0$, we let  $\mathcal{H}_{j} =  \{f: \ \Delta^{j+1}f=0\}$ be the space of all \textit{polynomials} of degree at most $j$. 
\end{definition}

The next result proves that 
$\mathcal{H}_{j}$ is a vector space of dimension $2j+2$. More specifically, the following statements motivated by \cite[Lemma 2.2, Lemma 2.3]{splinesBob} 
hold.

\begin{proposition}\label{prop:polyexpan} For any $j\geq 0$ and $k=1$ or $2$, let $f_{jk}$ be the solution to $\Delta^{j+1}f_{jk}=0$ that satisfies the boundary conditions 
$$\Delta^m f_{j k}(q_n) = \delta_{jm} \delta_{kn}\quad \text{for} \quad n=1,2. $$

\begin{enumerate}
    \item[(a)] For each $j\geq 0$, the set $\{f_{m k}: m=0, 1, \hdots, j; \,  k=1,2\}$ is a basis for $\mathcal{H}_j$. Furthermore,  $f\in \mathcal{H}_j$ if and only if 
\begin{equation}\label{basisexpan}
f=\sum_{m=0}^j\sum_{k=1}^2 (\Delta^m f(q_k))f_{mk}.
\end{equation}
\item[(b)] For $i\in \{1,2, \hdots, b+2\}$ and $k\in \{1,2\}$, we have that
\begin{equation}\label{eq:}
f_{jk}\circ F_i=\sum_{m=0}^j\sum_{n=1}^2\left(\dfrac{r}{b+2}\right)^m f_{(j-m)k}(F_i q_{n})f_{mn}.
\end{equation}
\end{enumerate}
\end{proposition}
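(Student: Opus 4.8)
The plan is to deduce everything from the principle that a polynomial $f \in \mathcal{H}_j$ is completely determined by its $2j+2$ boundary data $\Delta^m f(q_k)$, $0 \le m \le j$, $k \in \{1,2\}$. Concretely, I would study the linear map $T_j : \mathcal{H}_j \to \mathbb{R}^{2(j+1)}$ sending $f$ to the tuple $(\Delta^m f(q_k))_{m,k}$ and show it is a bijection. Surjectivity (which also gives existence of the $f_{jk}$) follows by building a preimage from the top down: given target data, first take $u_j$ to be the harmonic function with the prescribed boundary values $u_j(q_k)$, which exists by the energy-minimizing extension of Proposition~\ref{prop:renormalizationprop}; then successively solve $\Delta u_{m} = u_{m+1}$ using the Green operator, whose solution has vanishing boundary values, and add the unique harmonic function matching the prescribed value $u_m(q_k)$. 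The resulting $f = u_0$ lies in $\mathcal{H}_j$ because $\Delta^{j+1} f = \Delta u_j = 0$, and $f_{jk}$ is precisely the preimage of the standard basis vector indexed by $(j,k)$.

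The crux is injectivity of $T_j$, i.e.\ that a $g \in \mathcal{H}_j$ with $\Delta^m g(q_k) = 0$ for all $m \le j$ and $k$ must vanish. I would prove this by downward induction on the Laplacian level, using that a harmonic function with zero boundary values is $0$ (the Dirichlet problem with right-hand side $f=0$). Since $g \in \mathcal{H}_j$, the function $\Delta^j g$ is harmonic with $\Delta^j g(q_k) = 0$, hence identically zero; thus $g \in \mathcal{H}_{j-1}$, and applying the same argument to $\Delta^{j-1} g, \dots, \Delta^0 g$ forces $g \equiv 0$. Once $T_j$ is a bijection, the $f_{jk}$ form a basis and the expansion \eqref{basisexpan} is immediate: for $f \in \mathcal{H}_j$ the difference $g = f - \sum_{m,k} (\Delta^m f(q_k)) f_{mk}$ lies in $\mathcal{H}_j$ and, using $\Delta^p f_{mk}(q_n) = \delta_{pm}\delta_{kn}$, satisfies $\Delta^p g(q_n) = 0$ for all admissible $p,n$, so $g \equiv 0$. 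The same computation gives linear independence, completing part (a).

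For part (b) I would first record the sublemma $\Delta^m f_{jk} = f_{(j-m)k}$: applying $\Delta^m$ to the defining equation shows $\Delta^m f_{jk} \in \mathcal{H}_{j-m}$ with boundary data $\Delta^{p}(\Delta^m f_{jk})(q_n) = \Delta^{p+m} f_{jk}(q_n) = \delta_{j,p+m}\delta_{kn} = \delta_{p,\,j-m}\delta_{kn}$, which are exactly the defining conditions of $f_{(j-m)k}$, so the two agree by the uniqueness from part (a). Next, iterating the scaling identity \eqref{eq:scalingLap} gives $\Delta^m(f_{jk}\circ F_i) = (r/(b+2))^m (\Delta^m f_{jk}\circ F_i)$; in particular $\Delta^{j+1}(f_{jk}\circ F_i) = (r/(b+2))^{j+1}(\Delta^{j+1} f_{jk}\circ F_i) = 0$, so $f_{jk}\circ F_i \in \mathcal{H}_j$. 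Applying the expansion \eqref{basisexpan} to $f_{jk}\circ F_i$ and computing its boundary data via the scaling identity and the sublemma,
\[
\Delta^m(f_{jk}\circ F_i)(q_n) = \left(\frac{r}{b+2}\right)^m (\Delta^m f_{jk})(F_i q_n) = \left(\frac{r}{b+2}\right)^m f_{(j-m)k}(F_i q_n),
\]
and substituting these as the coefficients in \eqref{basisexpan} yields exactly the claimed identity.

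The main obstacle is the uniqueness statement underpinning part (a) — that a polynomial with vanishing multiharmonic boundary data is zero — which rests on the two-dimensionality of the harmonic space (uniqueness of harmonic extension) and on solvability of the Dirichlet problem through the Green operator; once that is secured, both the basis expansion and the self-similar identity in (b) become purely formal consequences of matching boundary data. A secondary point requiring care is the top-down existence construction, where one must check that adding a harmonic correction to fix the boundary values at level $m$ does not disturb the already-determined higher Laplacian data, which holds precisely because harmonic functions are annihilated by $\Delta$.
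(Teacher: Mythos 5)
Your proposal is correct and follows the same route the paper intends: part (a) comes from showing that an element of $\mathcal{H}_j$ is determined by its boundary data $(\Delta^m f(q_k))$ (so both sides of \eqref{basisexpan} lie in $\mathcal{H}_j$ and agree on that data), and part (b) is an application of the scaling identity \eqref{eq:scalingLap} together with the expansion from (a). You have simply supplied the details — the existence/uniqueness argument via the Green operator and the sublemma $\Delta^m f_{jk}=f_{(j-m)k}$ — that the paper's two-line proof leaves implicit.
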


\begin{proof} The first part follows by observing that both sides of~\eqref{basisexpan} belong to $\mathcal{H}_j$. The second part is an application of \eqref{eq:scalingLap}.
\end{proof}

The scaling identity~\eqref{eq:} will be particularly important in constructing the multiharmonic functions $f_{jk}$, as it will suffice to compute the values of $f_{jk}(F_iq_n)$ to construct the multiharmonic functions on all of $V_*$.

\subsection{Harmonic Functions}

To motivate the construction of polynomials on $K_b$,  we first describe the basis for the space $\mathcal{H}_0$ of harmonic functions by explicitly computing the harmonic functions $f_{01},f_{02}$, which we will do in the sequel. 
 
 We recall from Proposition \ref{prop:renormalizationprop}, given a function $f$ on $V_0$, that the energy minimizing extension $\widetilde{f}$ of $f$ to $V_1$ is given by
\begin{equation*}
    \begin{cases}
        \widetilde{f}(F_{b+1}q_2) = \left(\dfrac{b+1}{2b+1}\right) f(q_1) + \left(\dfrac{b}{2b+1}\right) f(q_2), \\[3ex]
        \widetilde{f}(F_{b+2}q_1) = \left(\dfrac{b}{2b+1}\right) f(q_1) + \left(\dfrac{b+1}{2b+1}\right) f(q_2).
    \end{cases}
\end{equation*}
From this observation, we iteratively compute the values of the harmonic function $f_{01}$ on $V_*$ by letting $f_{01}(q_1) = 1$ and $f_{01}(q_2) = 0$ and then defining
\begin{align}
\label{eq:harmonicExtension}
\begin{pmatrix}
f_{01}(F_iq_1) \\[1ex]
f_{01}(F_iq_2)
\end{pmatrix}
=
A_i
\begin{pmatrix}
f_{01}(q_1) \\[1ex]
f_{01}(q_2) 
\end{pmatrix},
\end{align}
where $A_i$ are the \textit{harmonic extension matrices}
\begin{align*}
    &A_{b+1} = \begin{pmatrix}
        1 & 0 \\[2ex]
        \dfrac{b+1}{2b+1} & \dfrac{b}{2b+1}
    \end{pmatrix},  \quad 
    A_{b+2} = \begin{pmatrix}
        \dfrac{b}{2b+1} & \dfrac{b+1}{2b+1} \\[2ex]
        0 & 1
    \end{pmatrix},\quad A_{i} = \begin{pmatrix}
        \dfrac{b+1}{2b+1} & \dfrac{b}{2b+1} \\[2ex]
        \dfrac{b}{2b+1} & \dfrac{b+1}{2b+1}
    \end{pmatrix}
\end{align*}
for $i \in \{1, 2, \dots, b\}$. We repeat this \textit{harmonic extension algorithm} iteratively to compute the values of $f_{01}$ on the rest of $V_*$, and we perform a similar computation to construct $f_{02}$. 

We illustrate several harmonic functions  $f_{01}$ for different branching parameters, see Figure~\ref{fig:harmonicf00}. Of particular interest, note that the harmonic extension algorithm generates $f_{01}$ as a linear function on the real line when $b=1$.

\begin{figure}[b!]
    \centering
    \includegraphics[width=0.9\linewidth]{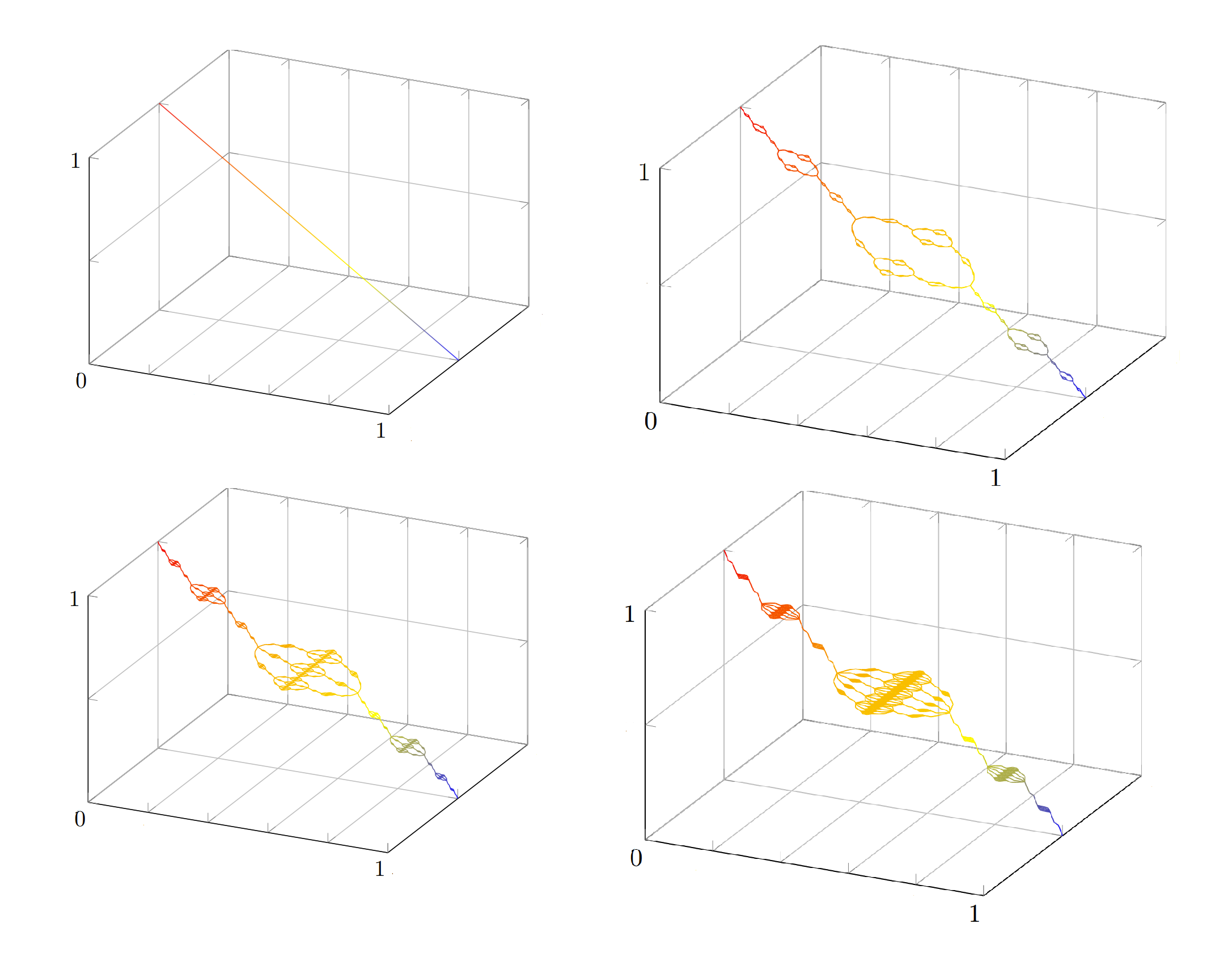}
    \caption{The harmonic function $f_{01}$ on the bubble-diamond fractals with branching parameter $b=1,2,3,5$.}
    \label{fig:harmonicf00}
\end{figure}

\subsection{Higher Order Polynomials} In this section, for all $j\geq 0$,  we evaluate $f_{jk}(F_iq_n)$ for all $i \in \{1, 2, \dots, b+ 2\}$ and  $k \in \{1, 2\}$. These values together with~\eqref{eq:} are needed  to construct the multiharmonic basis $\{f_{01}, f_{02}, f_{11}, f_{12}, \hdots f_{j1},f_{j2}\}$ for $\mathcal{H}_j$. Motivated by the results proved in  \cite{splinesBob}, we need to evaluate the  inner products of the basis elements, i.e., 
\begin{equation}\label{eqforI}
I(jk,j'k')= \int_{K_b} f_{jk} f_{j'k'} d \mu.
\end{equation} 
for all $j,j' \geq 0$ and $k,k' \in \{1, 2\}$. Subsequently, we provide two recursive formulas for $f_{jk}(F_iq_n)$ and $I(jk,j'k')$ that can be simultaneously solved. In particular, we will define the quantities to be solved for as
\begin{align}
\label{eq:Defabpq}
\begin{cases}
a_\ell = I(\ell k,0k), &\\[2ex]
b_\ell = I(\ell k,0n)  &\text{for} \ k \neq n, \\[2ex]
p_\ell = \left(\dfrac{r}{b+2}\right)^{\ell} f_{\ell k}(F_iq_k) ) = \left(\dfrac{r}{b+2} \right)^{\ell} f_{\ell k}(F_kq_i ) &\text{for} \ i \neq k, \\[2ex]
q_\ell = \left(\dfrac{r}{b+2}\right)^{\ell} f_{\ell k}(F_iq_n ) &\text{for} \ i,k, n \ \text{distinct}.
\end{cases}
\end{align}

We begin by evaluating the initial value of these quantities. We note by the self-similarity of $\mu$ that
\begin{equation*}
    I(0k,0k')=\int_{K_b} f_{0k}f_{0k'}d\mu=\frac{1}{b+2}\sum_{i=1}^{b+2}  \int_{K_b} (f_{0k}\circ F_i) (f_{0k'}\circ F_i) d\mu.
\end{equation*}
It follows  from~\eqref{eq:} that the following equation holds
\begin{equation}
\label{eq:recursiceIP}
\begin{pmatrix}
I(01,01) \\
I(01,02) \\
I(02,01) \\
I(02,02) 
\end{pmatrix}
=
\begin{pmatrix}
A(11,11) & A(11,12) & A(11,21) & A(11,22) \\
A(12,11) & A(12,12) & A(12,21) & A(12,22) \\
A(21,11) & A(21,12) & A(21,21) & A(21,22) \\
A(22,11) & A(22,12) & A(22,21) & A(22,22) 
\end{pmatrix}
\begin{pmatrix}
I(01,01) \\
I(01,02) \\
I(02,01) \\
I(02,02) 
\end{pmatrix},
\end{equation}
where the values of $A(kk',nn')$ are computed as
\begin{equation*}
A(kk',nn') = \dfrac{1}{b+2}\sum_{i=1}^{b+2} f_{0k}(F_i(q_n))f_{0k'}(F_i(q_{n'}))
\end{equation*}
from the harmonic extension algorithm.

We observe by symmetry that $a_0 = I(01,01)=I(02,02)$ and $b_0 =I(01,02) = I(02,01)$ and that 
\begin{equation*}
\sum_{i=1}^{2} \sum_{j=1}^{2} I(0i,0j) = \int_{K_b} f_{01}(f_{01} + f_{02}) d \mu + \int_{K_b} f_{02}(f_{01} + f_{02}) d \mu 
= \int_{K_b}( f_{01} + f_{02} ) d \mu = 1.
\end{equation*}
Consequently, we have that \eqref{eq:recursiceIP} reduces to
\begin{equation*}
\begin{pmatrix}
a_0 \\
b_0 
\end{pmatrix}
=
\begin{pmatrix}
A(11,11) + A(11,22) & A(11,12) + A(11,21)  \\
A(12,11)+ A(12,22) & A(12,12) + A(12,21)   
\end{pmatrix}
\begin{pmatrix}
a_0 \\
b_0 
\end{pmatrix}.
\end{equation*}
With this new matrix representation, we can now directly compute the initial values of $a_\ell, b_\ell$ as 
\begin{align}
\label{eqLinitiala0b0}
a_0 = \frac{b+1}{2+4b}, \quad \quad b_0 = \frac{b}{2+4b},
\end{align}
since $a_0 + b_0 = 1/2$ and $\begin{pmatrix} a_0 & b_0 \end{pmatrix}^\top$ is an eigenvector of the above matrix with eigenvalue $1$. Furthermore, we compute the initial values of $p_\ell, q_\ell$ as
\begin{align}\label{eqLinitialp0q0}
p_0 = \frac{b+1}{2b+1}, \quad \quad   q_0 = \frac{b}{2b+1}
\end{align}
from the construction of the harmonic functions above and~\eqref{eq:}.

We will now state our two recursive formulas for computing $a_\ell, b_\ell, p_\ell$, and $q_\ell$. The first recursive formula for $a_\ell, b_\ell$ is an application of~\cite[Lemma 2.4]{splinesBob}.

\begin{theorem}\label{prop:recinner}
For any $j > 0$, the quantities $a_j$ and $b_j$ in (\ref{eq:Defabpq}) satisfy the recursive formula
\begin{equation} \label{recajbj}
\begin{cases}
 \dfrac{(b+2)^{j+1}(2b+1)^{j+1}}{b^j}a_j &= \nu_1 a_j + \nu_2 b_j + (b+1) \displaystyle\sum_{\ell = 0}^{j-1}(a_\ell + b_\ell)\left((b+1)p_{j - \ell} + b q_{j - \ell}\right), \\[1ex]
    \dfrac{(b+2)^{j+1}(2b + 1)^{j+1}}{b^j}b_j &= \upsilon_1 a_j + \upsilon_2 b_j + (b+1) \displaystyle\sum_{\ell = 0}^{j - 1} (a_j + b_j)(b p_{j-\ell} + (b+1)q_{j-\ell}),  
\end{cases}
\end{equation}
where
\begin{equation*}
    \nu_1 = \dfrac{2b^3 + 8b^2 + 7b + 2}{2b+1}, \quad \nu_2 = \dfrac{2b^3 + 6b^2 + 6b + 2}{2b+1}, \quad \upsilon_1 = \dfrac{2b^3 + 4b^2 + 2b}{2b+1}, \quad \upsilon_2 = \dfrac{2b^3 + 6b^2 + 3b}{2b+1}
\end{equation*}
and the initial conditions $a_0, b_0, p_0,$ and $q_0$ are given by~\eqref{eqLinitiala0b0} and~\eqref{eqLinitialp0q0}.
\end{theorem}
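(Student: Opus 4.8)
The plan is to mimic the self-similar computation of \cite[Lemma 2.4]{splinesBob}, combining the self-similarity of $\mu$ with the scaling identity~\eqref{eq:} to turn the integral defining $a_j$ (and $b_j$) into a relation among inner products of lower degree. Concretely, I would start from
\[
a_j = I(jk,0k) = \int_{K_b} f_{jk}f_{0k}\,d\mu = \frac{1}{b+2}\sum_{i=1}^{b+2}\int_{K_b}(f_{jk}\circ F_i)(f_{0k}\circ F_i)\,d\mu,
\]
using the scaling condition on $\mu$. I would then substitute the expansions of $f_{jk}\circ F_i$ and $f_{0k}\circ F_i$ given by~\eqref{eq:} and integrate term by term. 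Since $\int_{K_b} f_{mn}f_{0n'}\,d\mu = I(mn,0n')$ equals $a_m$ when $n=n'$ and $b_m$ when $n\neq n'$, each local integral becomes a sum over $m=0,\dots,j$ of products of the harmonic values $f_{0k}(F_iq_{n'})$, the intermediate monomial values $f_{(j-m)k}(F_iq_n)$, and the inner products $a_m$ or $b_m$.

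Next I would split this sum according to whether $m=j$ or $m<j$. The $m=j$ terms are precisely those in which $f_{(j-m)k}=f_{0k}$, so they reproduce $a_j$ and $b_j$ themselves; collecting them reduces to evaluating the quadratic sums $\sum_{i}\big((f_{0k}(F_iq_1))^2+(f_{0k}(F_iq_2))^2\big)$ and $\sum_i f_{0k}(F_iq_1)f_{0k}(F_iq_2)$ over the $b+2$ copies, which I would compute directly from the harmonic extension matrices $A_i$; this is what produces the constants $\nu_1,\nu_2$ (and, for the $b_j$ equation, $\upsilon_1,\upsilon_2$). The remaining $m<j$ terms carry the monomial values of intermediate degree; after summing over the $b+2$ copies and using the symmetry that collapses the many values $f_{\ell k}(F_iq_n)$ into the two quantities $p_\ell,q_\ell$ of~\eqref{eq:Defabpq}, together with $a_m$ or $b_m$ from the inner products, I expect each to collect into a term $(a_m+b_m)\big((b+1)p_{j-m}+bq_{j-m}\big)$, i.e.\ the $\ell$-summand of~\eqref{recajbj}. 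Finally I would multiply through by the normalization $\tfrac{(b+2)^{j+1}(2b+1)^{j+1}}{b^j}$ and repeat the computation verbatim with $f_{0n}$ ($n\neq k$) in place of $f_{0k}$ to obtain the companion equation for $b_j$; the two resulting equations form a linear system that determines $(a_j,b_j)$ from the lower-order data.

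The main obstacle I anticipate is the bookkeeping of the sum over the $b+2$ cells. The two ``end'' copies $F_{b+1}$ and $F_{b+2}$, which are attached to the boundary $V_0$, contribute harmonic values different from those of the $b$ ``bubble'' copies $F_1,\dots,F_b$ that share the pair of interior vertices, so the sums must be organized by type before they simplify. The delicate point is to verify that, once the explicit harmonic values are inserted, the coefficients multiplying $a_m$ and $b_m$ in the $m<j$ block collapse to the \emph{same} symmetric combination, so that the lower-order contribution genuinely factors as $(a_m+b_m)$ times a weight built from $p_{j-m}$ and $q_{j-m}$; this collapse, and the resulting weights $b+1$ and $b$, is exactly what the parallel-edge structure of the bubble-diamond forces, and it is the computational heart of the argument. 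Carefully tracking the powers of $r/(b+2)$ attached to each intermediate monomial value, so that they combine with the normalization factor to leave the stated closed form, is the other place where errors are easy to make.
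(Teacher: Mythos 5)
Your proposal is correct and follows essentially the same route as the paper: the paper likewise applies the self-similarity of $\mu$ together with the scaling identity~\eqref{eq:} to decompose $I(jk,0k)$ cell by cell, separates the top-degree ($\ell=j$) terms---whose harmonic-value quadratic sums produce $\nu_1,\nu_2$ (resp.\ $\upsilon_1,\upsilon_2$)---and collapses the remaining lower-order contributions into $(a_\ell+b_\ell)\bigl((b+1)p_{j-\ell}+bq_{j-\ell}\bigr)$ before renormalizing. The bookkeeping subtleties you flag (end cells versus bubble cells, and the powers of $r/(b+2)$ carried by the intermediate values) are exactly the points the paper's displayed computation absorbs without comment.
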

\begin{proof}
Let $\lambda =1/(b+2)$ and $r=b/(2b+1)$. We compute~\eqref{eqforI} for $j'=0$ to find that
\begin{align*}
    \left(\dfrac{2b+1}{b[b+1]}\right)\left(\dfrac{(r\lambda)^{-j}}{\lambda}\right) a_j &= \sum_{\ell = 0}^{j} \left[a_\ell\left(\left[1 + \dfrac{1}{b}\right] p_{j - \ell} + q_{j - \ell}\right) + b_\ell \left(\left[1 - \dfrac{1}{b+1}\right]p_{j - \ell} + q_{j - \ell}\right)\right] \\ \notag
    &\quad\quad\quad\quad + \left(\dfrac{2b+1}{b[b+1]}\right)\sum_{\ell = 0}^j \left(b_\ell p_{j - \ell}\right) + \left(\dfrac{2b+1}{b[b+1]}\right)a_j + \dfrac{b_j}{b} \\ \notag
    &= \sum_{\ell = 0}^{j} (a_\ell + b_\ell)\left(\left[1 + \dfrac{1}{b}\right]p_{j - \ell} + q_{j - \ell}\right) + \left(\dfrac{2b+1}{b[b+1]}\right)a_j + \dfrac{b_j}{b} \\ 
    &= \sum_{\ell = 0}^{j - 1} (a_\ell + b_\ell)\left(\left[1 + \dfrac{1}{b}\right]p_{j - \ell} + q_{j - \ell}\right) + \dfrac{2b^3 + 8b^2 + 7b + 2}{b(b+1)(2b+1)}a_j + \dfrac{2(b+1)^2}{b(2b+1)} b_j,
\end{align*} 
which after simplifications yield the first equation as is desired. The second equation is obtained through a comparable method. 
\end{proof}

The second recursive formula for $p_\ell, q_\ell$ is an application of~\cite[Lemma 2.6]{splinesBob}.

\begin{theorem}\label{recurvaluesgraphs}
For any $j > 0$, the quantities  $p_j$ and $q_j$ in (\ref{eq:Defabpq}) satisfy the recursive formula
\begin{equation}\label{recpjqj}
    \begin{cases} (2b+1)p_j &= - \displaystyle\sum_{\ell = 0}^{j - 1}\left[p_{j - \ell - 1}((b+1)^2a_\ell + b^2 b_\ell) + b(b+1)q_{j - \ell - 1}(a_\ell + b_\ell)\right] - (b+1)b_{j-1}, \\[1ex]
    (2b+1)q_j &= - \displaystyle\sum_{\ell = 0}^{j - 1}\left[b(b+1)p_{j - \ell - 1}(a_\ell + b_\ell) + q_{j - \ell - 1}((b+1)^2a_\ell + b^2 b_\ell)\right] - bb_{j-1},
    \end{cases} 
\end{equation}
where the initial conditions $a_0, b_0, p_0,$ and $q_0$ are given by~\eqref{eqLinitiala0b0} and~\eqref{eqLinitialp0q0}.
\end{theorem}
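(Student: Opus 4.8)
The plan is to realize each monomial of positive degree through the Green operator and then extract a recursion by self-similarity. Since $\Delta f_{jk}=f_{(j-1)k}$ (immediate from the defining conditions on $f_{jk}$ in Proposition~\ref{prop:polyexpan}) and $f_{jk}(q_1)=f_{jk}(q_2)=0$ for $j>0$, the Green's-function representation of the Dirichlet solution gives
\[
f_{jk}(p)=-\int_{K_b}G(p,q)\,f_{(j-1)k}(q)\,d\mu(q).
\]
First I would evaluate this at $p=F_iq_k$ (with $i\neq k$) to target $p_j$, and at $p=F_iq_n$ (with $i,k,n$ distinct) to target $q_j$, so that after multiplying by the appropriate power of $r/(b+2)$ the left-hand sides become exactly the quantities in~\eqref{eq:Defabpq}.

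Next I would split the integral over the $b+2$ level-one cells using the self-similarity of $\mu$, change variables on each cell by $q=F_{i'}q'$, and expand the integrand $f_{(j-1)k}\circ F_{i'}$ by the scaling identity~\eqref{eq:} of Proposition~\ref{prop:polyexpan}. This rewrites everything in terms of the elementary integrals $\int_{K_b}G(F_iq_k,F_{i'}q')\,f_{mn}(q')\,d\mu(q')$, weighted by the point values $f_{(j-1-m)k}(F_{i'}q_n)$, which are themselves lower-order $p$'s and $q$'s.

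The heart of the computation is evaluating those elementary integrals. On every off-diagonal cell $i'\neq i$ the singularity $F_iq_k$ lies outside the open cell, so $q'\mapsto G(F_iq_k,F_{i'}q')$ is harmonic and hence equals the harmonic extension of its two boundary values $G(F_iq_k,F_{i'}q_{n'})$; the integral then collapses to a degree-independent combination of the inner products $I(mn,0n')\in\{a_m,b_m\}$, with coefficients that are Green's-function values at level-one vertices. On the diagonal cell $i'=i$ I would instead use the self-similar identity for $G$ — the rescaled Green's function plus a bilinear correction that is harmonic in each variable — whose constants I read off from the explicit finite kernel~\eqref{eq:greensfunctionfinite} through $\alpha$, $\beta$, and $r$; this again produces a degree-independent combination of $a_m$ and $b_m$. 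Because $p_\ell$ and $q_\ell$ carry exactly the compensating power of $r/(b+2)$, all such powers telescope and leave constant coefficients, matching the shape of~\eqref{recpjqj}.

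Finally I would assemble the pieces: summing the level-one Green's-function vertex values over the $b+2$ cells, and using that $b$ of them enter symmetrically (the parallel bubble edges) while the two series cells carry the harmonic weights $(b+1)/(2b+1)$ and $b/(2b+1)$, collapses the cell sum into the coefficients $(b+1)^2a_\ell+b^2b_\ell$ and $b(b+1)(a_\ell+b_\ell)$; the lowest-degree term $m=j-1$, where $f_{(j-1-m)k}=f_{0k}$ contributes through its nonzero boundary values, peels off as the inhomogeneous term $-(b+1)b_{j-1}$ (respectively $-b\,b_{j-1}$). The $q_j$ equation then follows from the identical argument carried out at the distinct vertex, with the two harmonic weights entering in swapped roles. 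I expect the main obstacle to be precisely this geometric bookkeeping — computing $G$ at all pairs of level-one vertices of the bubble-diamond graph and summing with the correct multiplicities and weights so that the constants land exactly on $(b+1)^2$, $b^2$, and $b(b+1)$, and not merely on the correct combinations up to an overall scaling.
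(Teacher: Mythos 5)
First, note that the paper itself supplies no proof of this theorem: it is stated without a proof environment and justified only by the citation to [splinesBob, Lemma~2.6], so there is no argument in the paper to compare against step by step. Your strategy --- represent $f_{jk}(p)=-\int_{K_b}G(p,q)f_{(j-1)k}(q)\,d\mu(q)$ (valid since $\Delta f_{jk}=f_{(j-1)k}$ and $f_{jk}$ vanishes on $V_0$ for $j>0$), evaluate at the two junction points of $V_1\setminus V_0$, decompose the integral over the $b+2$ level-one cells, and expand $f_{(j-1)k}\circ F_{i'}$ by the scaling identity~\eqref{eq:} --- is the right one and is essentially the route behind the cited lemma. One simplification: you do not need a separate self-similar identity for $G$ on the ``diagonal'' cell. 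The point $F_iq_k$ is a boundary point of every cell containing it, and from~\eqref{eq:greensfunctionfinite} the function $y\mapsto G(F_iq_k,y)$ is exactly piecewise harmonic at level one, with only two nonzero vertex values $\alpha=r^2(b+1)/b$ and $\beta=r^2$; every cell is therefore handled by the same harmonic-extension reduction to $a_\ell$ and $b_\ell$.

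The genuine gap is that the proposal stops precisely where the content of the theorem lies, and the one quantitative claim you make about that step is unjustified. Carrying your plan through (write $\phi_\ell=f_{\ell 1}(p_1)$, $\psi_\ell=f_{\ell 1}(p_2)$, $\lambda=1/(b+2)$, and use $r/b=1/(2b+1)$), the cell decomposition yields
\begin{equation*}
(2b+1)\phi_j=-(b+1)(r\lambda)^{j}b_{j-1}-\sum_{\ell=0}^{j-1}(r\lambda)^{\ell+1}\Bigl(\phi_{j-\ell-1}\bigl[(b+1)^2a_\ell+b^2b_\ell\bigr]+b(b+1)\psi_{j-\ell-1}(a_\ell+b_\ell)\Bigr),
\end{equation*}
so the coefficients $(b+1)^2$, $b^2$, $b(b+1)$, $(2b+1)$, and $(b+1)b_{j-1}$ all come out exactly as you predicted. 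However, after multiplying by $(r\lambda)^j$ to convert $\phi,\psi$ into the normalized quantities $p,q$ of~\eqref{eq:Defabpq}, each summand retains a residual factor $(r\lambda)^{2\ell+2}$ and the inhomogeneous term a factor $(r\lambda)^{2j}$: the power carried by $p_{j-\ell-1}$ compensates only part of the exponent, and $a_\ell,b_\ell$ carry no normalization at all in~\eqref{eq:Defabpq}. Your assertion that ``all such powers telescope and leave constant coefficients'' is therefore exactly the step that must be proved, and as written it does not hold; either the residual powers must be absorbed into a renormalization of $a_\ell,b_\ell$ (making the statement match~\eqref{recpjqj} only after that renormalization is made explicit), or an actual cancellation must be exhibited. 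Until that bookkeeping is done, the argument establishes the shape of the recursion but not the identity claimed.
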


By solving \eqref{recajbj} and \eqref{recpjqj} simultaneously, we are able to compute $f_{jk}(F_iq_n)$, and we then apply \eqref{eq:} to compute the values of $f_{jk}$ on $V_*$ so that $f_{jk}$ can be continuously extended to functions on $K_b$. This completes our construction of a first basis for polynomials on the bubble-diamond fractals. 

 Figure \ref{fig:harmonicfjk} displays the basis function $f_{jk}$ on the bubble-diamond fractal with branching parameter $b = 2$.

\begin{figure}[b!]
    \centering
    \includegraphics[width=0.9\linewidth]{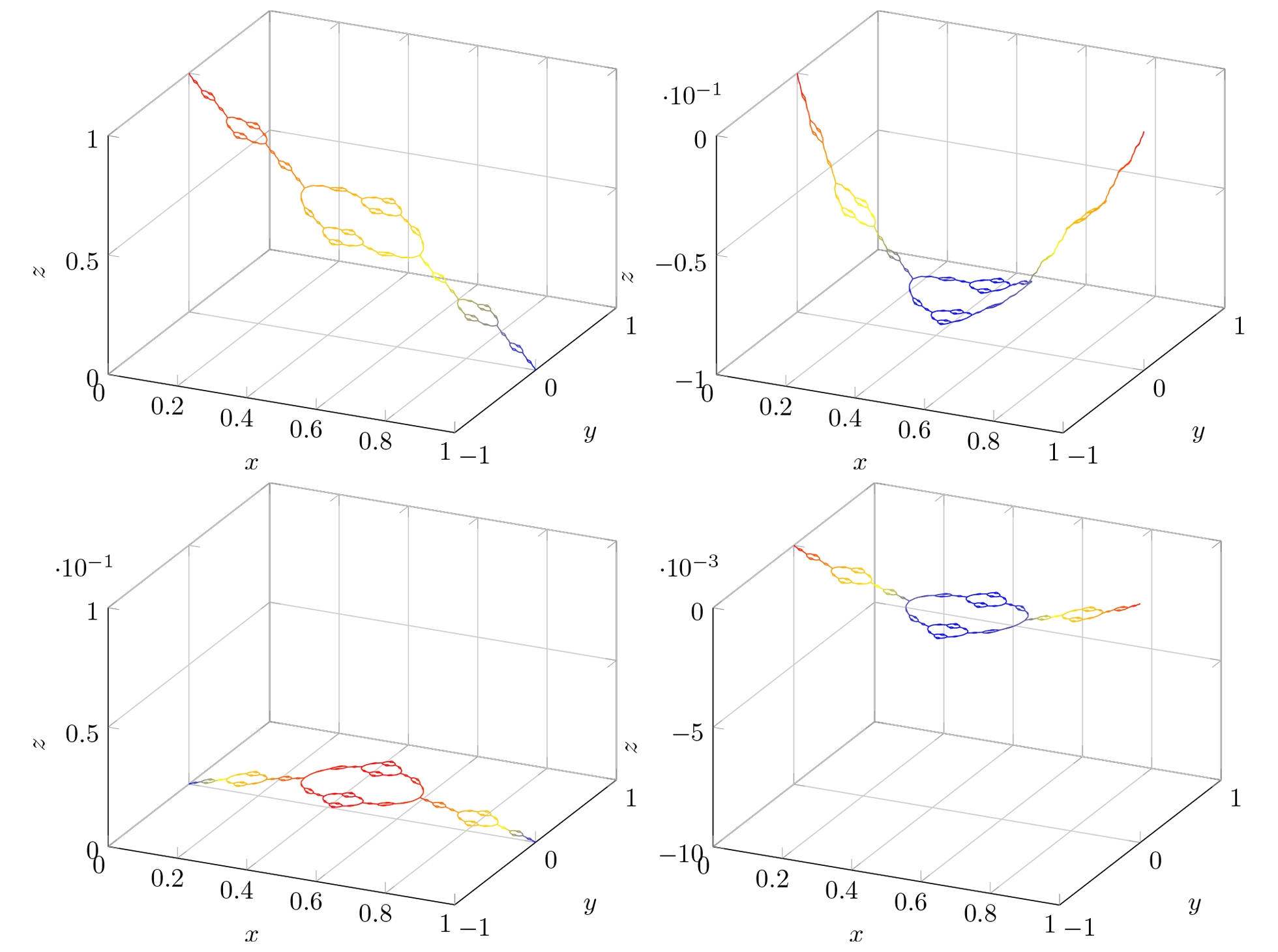}
    \caption{The harmonic functions $f_{01}, f_{11}, f_{21}, f_{31}$ on the bubble-diamond fractal with branching parameter $b=2$.}
    \label{fig:harmonicfjk}
\end{figure}

\section{Orthogonal Polynomials on the Bubble--Diamond Fractals}\label{sec:OP}

Finally, in this section, we construct a sequence of orthogonal polynomials on the bubble-diamond fractals in analogy to the Legendre polynomials. To do so, we consider a different basis for $\mathcal{H}_j$, whose members we call monomials on the bubble-diamond fractals.  When applying the Gram-Schmidt orthogonalization process to these monomials, we obtain the desired sequence of orthogonal polynomials on $K_b$. For comparison to the Sierpinski, we refer to \cite{OST, SOPKasso}. 

We will first define the monomial functions on $K_b$ that we will later construct.

\begin{definition}\label{def:mono} 

For each $j \geq 0$ and $k \in \{1, 2\}$, we will let $P_{jk} \in \mathcal{H}_{j}$ be the \textit{monomials} defined by the boundary conditions
\begin{align*}
    \begin{cases}
    \Delta^m P_{jk}(q_1) = \delta_{mj} \delta_{k1},  \\[1ex]
    \partial_n \Delta^m P_{jk}(q_1) = \delta_{mj} \delta_{k2}
    \end{cases}
\end{align*}
for all $m \in \{0, 1, \dots, j\}$. 

\end{definition}

We observe that the set $\{P_{01}, P_{02}, \dots, P_{j1}, P_{j2}\}$ in $\mathcal{H}_j$ is analogous to the set of monmials  $x^{2j + k}/(2j+k)!$ on the real line. Furthermore, we note that this construction could have similarly been performed by using the boundary conditions at $q_2 \in V_0$. 

As with the multiharmonic functions $f_{jk}$, we have a comparable scaling identity for $P_{jk}$ that follows from Proposition~\ref{prop:recinner}.

\begin{proposition}

For $i \in \{1, 2, \dots, b+2\}$ and $k \in \{1, 2\}$, we have that
\begin{equation}\label{eq:monoscaling}
    P_{jk} \circ F_{i} = \begin{cases}
        \left(\dfrac{r}{b+2}\right)^{j} P_{j1} & k=1, \\
        r\left(\dfrac{r}{b+2}\right)^{j}P_{j2} &k=2.
    \end{cases}
\end{equation}

\end{proposition}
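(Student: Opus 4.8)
The plan is to show that both sides of~\eqref{eq:monoscaling} lie in $\mathcal{H}_j$ and then to identify them by matching the defining boundary data of the monomials at $q_1$. That $P_{jk}\circ F_i \in \mathcal{H}_j$ is immediate from the scaling identity~\eqref{eq:scalingLap}: iterating it gives $\Delta^m(P_{jk}\circ F_i) = (r/(b+2))^m\,(\Delta^m P_{jk})\circ F_i$ for every $m$, so in particular $\Delta^{j+1}(P_{jk}\circ F_i) = (r/(b+2))^{j+1}(\Delta^{j+1}P_{jk})\circ F_i = 0$. Since an element of $\mathcal{H}_j$ is uniquely determined by the $2(j+1)$ quantities $\Delta^m(\cdot)(q_1)$ and $\partial_n\Delta^m(\cdot)(q_1)$ for $m=0,\dots,j$ (this is exactly what makes Definition~\ref{def:mono} well posed), it suffices to compute these quantities for $P_{jk}\circ F_i$ and check that they agree with those of the right-hand side.

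The two ingredients I would use are the iterated Laplacian scaling above and a companion scaling law for the normal derivative. For the cell $F_{b+1}(K_b)$ attached to $q_1$ — recall $F_{b+1}(q_1)=q_1$ — the Laplacian values follow at once: $\Delta^m(P_{jk}\circ F_{b+1})(q_1) = (r/(b+2))^m \Delta^m P_{jk}(q_1) = (r/(b+2))^j\delta_{mj}\delta_{k1}$. For the normal derivative I would return to its definition as the limit $\partial_n h(q_1)=\lim_m r^{-m}(h(q_1)-h(x_m))$ with $x_m=(F_{b+1})^m(q_2)$; since $F_{b+1}(q_1)=q_1$ and $F_{b+1}(x_m)=x_{m+1}$, a one-line reindexing yields the scaling law $\partial_n(h\circ F_{b+1})(q_1)=r\,\partial_n h(q_1)$. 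Combining this with the Laplacian scaling gives $\partial_n\Delta^m(P_{jk}\circ F_{b+1})(q_1) = r\,(r/(b+2))^j\delta_{mj}\delta_{k2}$. Matching these two data sets against the boundary data of $(r/(b+2))^jP_{j1}$ (when $k=1$) and of $r(r/(b+2))^jP_{j2}$ (when $k=2$) then produces the claimed identity for $i=b+1$. An alternative route to the same conclusion is to expand $P_{jk}$ in the $\{f_{mn}\}$ basis and apply~\eqref{eq:} term by term, but the direct boundary-data computation seems cleaner.

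The main obstacle is the normal-derivative scaling together with the fact that $F_i(q_1)$ is an \emph{interior} junction vertex — namely $p_1$ for the bubble maps $i\in\{1,\dots,b\}$, and $p_2$ for $i=b+2$ — rather than the boundary vertex $q_1$. For those maps the reference vertex $q_1$ does not sit at the image of $q_1$, so the clean reindexing of the $\partial_n$ limit is unavailable and one must instead read off the data of $P_{jk}\circ F_i$ from the interior values $\Delta^m P_{jk}(p_1)$, $\Delta^m P_{jk}(p_2)$ and from the normal derivatives of $P_{jk}$ at those junctions, where contributions from every cell meeting the vertex must be accounted for. I expect the technical heart of the argument to be showing that, after this bookkeeping, the resulting expansion of $P_{jk}\circ F_i$ in the monomial basis collapses to the single advertised term; the $F_{b+1}$ computation above serves as the model case, and the analogous computation based at $q_2$ via $F_{b+2}$ supplies the companion relations by symmetry.
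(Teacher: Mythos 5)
Your computation for the cell that fixes $q_1$ is correct and complete: iterating \eqref{eq:scalingLap} gives $\Delta^m(P_{jk}\circ F_{b+1})(q_1)=(r/(b+2))^{j}\delta_{mj}\delta_{k1}$, the reindexing $F_{b+1}(x_m)=x_{m+1}$ gives the normal-derivative scaling $\partial_n(h\circ F_{b+1})(q_1)=r\,\partial_n h(q_1)$, and since an element of $\mathcal{H}_j$ is determined by the $2(j+1)$ boundary data $\Delta^m(\cdot)(q_1)$, $\partial_n\Delta^m(\cdot)(q_1)$, the identity follows for that map. This is in fact more careful than the paper's own argument, which computes only the top Laplacian $\Delta^j(P_{j1}\circ F_1^m)$ (for iterates of a single map, written $F_1^m$ but necessarily meaning the cell attached to $q_1$) and then appeals to $\Delta P_{(j+1)k}=P_{jk}$ to ``integrate down'' without ever pinning the constants of integration, i.e., without checking exactly the lower-order boundary data that your normal-derivative scaling supplies.

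The difficulty you flag for the remaining maps is real but not fillable: the identity as stated is \emph{false} for $i\in\{1,\dots,b\}$ and for $i=b+2$, so the bookkeeping you anticipate cannot collapse to the advertised single term. Concretely, $P_{02}$ is the harmonic function with $P_{02}(q_1)=0$ and $P_{02}(q_2)=\beta_0=-1$, so for a bubble copy $i\leq b$ one has $(P_{02}\circ F_i)(q_1)=P_{02}(p_1)=-b/(2b+1)\neq 0=rP_{02}(q_1)$; similarly for $b=1$, where $P_{11}(x)=x^2/2$ on $[0,1]$ and the middle cell is $F_1(x)=(x+1)/3$, one has $(P_{11}\circ F_1)(0)=1/18\neq 0$. (If the scaling held for every $i$, the values of $P_{jk}$ on all of $V_*$ would be determined by its two values on $V_0$, which is incompatible with $\dim\mathcal{H}_j=2j+2$.) The proposition should therefore be read as, and your argument correctly proves, the scaling identity for the map fixing the base point $q_1$ and its iterates, which is all the subsequent construction uses; do not spend effort trying to extend it to the other cells.
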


\begin{proof}
Let $\lambda = 1/(b+2)$ and $r = b/(2b+1)$. By taking Laplacians, we compute using ~\eqref{eq:scalingLap} that
\begin{equation*}
    \Delta^j (P_{j1} \circ F_1^m) = (r\lambda)^m P_{01} \circ F_1^m = (r\lambda)^m P_{01}
\end{equation*}
since $P_{01} = 1$ is a constant map. We finally note that $\Delta P_{(j+1)k} = P_{jk}$, which gives us the desired result for $k = 1$. The result for $k=2$ is obtained through a comparable method.
\end{proof}

Similar to our application of Proposition~\ref{prop:polyexpan} above, we can now use~\eqref{eq:monoscaling} to construct $P_{jk}$ on all of $V_*$ from just the value of $P_{jk}(q_2)$, which we compute by solving a pair of recursive formulas. To this end, we will define the quantities
\begin{equation}\label{monoconst}
    \begin{cases}
        \alpha_j = P_{j1}(q_2), \\[1ex]
        \beta_j = P_{j2}(q_2), \\[1ex]
        \eta_j = \partial_n P_{j1}(q_2), \\[1ex]
        \gamma_j = \partial_n P_{j2}(q_2)
    \end{cases}
\end{equation}
with initial values $\alpha_0 = 1$, $\alpha_1 = 1/2$, $\beta_0 = -1$, $\eta_0 = -1$, and $\gamma_0 = 1$ computed by noting that $P_{00} = f_{01} + f_{02}$ and $P_{01} = f_{02}$.

We will now state the two recursive formulas for $\alpha_j, \beta_j$ and $\eta_j, \gamma_j$, which are derived from the same method as \cite[Theorem 2.3, Theorem 2.12]{CalcSGI}.

\begin{theorem}
For any $j > 0$, the quantities $\alpha_j$, $\beta_j$, $\eta_j$, and $\gamma_j$ in \eqref{monoconst} satisfy the recursive formula
\begin{equation}
    \begin{cases}
        \alpha_j &= \zeta_j\cdot \displaystyle\sum_{\ell=1}^{j-1} \alpha_{j-\ell}\left(2\alpha_{\ell} + \sum_{\ell'=1}^\ell\alpha_{\ell-\ell'}\alpha_{\ell'}\right), \\[1ex]
        \beta_j &= \iota_j \cdot \displaystyle\sum_{\ell = 0}^{j - 1}\sum_{\ell' = 0}^{j-\ell} \beta_\ell \alpha_{\ell'}\alpha_{j-\ell-\ell'}, \\[1ex]
        \eta_j &= b_{j-1} + \displaystyle\sum_{\ell=0}^j \alpha_{j-\ell}\alpha_{\ell - 1},\\[1ex]
        \gamma_j &= \displaystyle\sum_{\ell = 0}^j \beta_{j - \ell}\alpha_{\ell - 1},
    \end{cases}
\end{equation}
where
\begin{equation*}
    \zeta_j = \dfrac{(b+1)^2}{(b+2)(2b+1)((b+2)^{j-1}(2b+1)^{j-1}/b^{j-1} -1)},\quad \iota_j = \dfrac{(b+1)^2}{(2b+1)((b+2)^{j}(2b+1)^{j}/b^{j} -1)},
\end{equation*}
and the initial conditions $\alpha_0$, $\alpha_1$, $\beta_0$, $\eta_0$, and $\gamma_0$ are given above.
\end{theorem}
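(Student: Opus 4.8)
The plan is to prove the four recursions in two stages, mirroring \cite[Theorem 2.3, Theorem 2.12]{CalcSGI}: first the boundary values $\alpha_j = P_{j1}(q_2)$ and $\beta_j = P_{j2}(q_2)$, and then the normal derivatives $\eta_j$ and $\gamma_j$. The values are handled first because the normal-derivative recursions feed on both the $\alpha_\ell$ and the inner products $b_\ell$ from Theorem~\ref{prop:recinner}. Two structural facts drive everything: the defining relation $\Delta P_{(j+1)k} = P_{jk}$ with $\Delta^j P_{j1} = P_{01} = 1$, and the multiharmonic expansion of Proposition~\ref{prop:polyexpan} together with the cell-scaling identity~\eqref{eq:} for the $f_{mk}$.

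For the values, I would first record that $\Delta^m P_{j1}(q_2) = P_{(j-m)1}(q_2) = \alpha_{j-m}$, so that Proposition~\ref{prop:polyexpan} gives the expansion $P_{j1} = f_{j1} + \sum_{m=0}^{j}\alpha_{j-m}f_{m2}$, and likewise $P_{j2} = \sum_{m=0}^{j}\beta_{j-m}f_{m2}$. I would then restrict $P_{j1}$ to each of the $b+2$ cells, expand $f_{mk}\circ F_i$ by~\eqref{eq:}, and match the resulting expressions at the interior junction vertices $p_1 = F_{b+1}q_2$ and $p_2 = F_{b+2}q_1$. Because the cell $F_{b+2}$ fixes $q_2$, the unknown $\alpha_j$ reappears self-referentially on the right-hand side, weighted by the scaling factor $(r/(b+2))^{j}$; the products $\alpha_{j-\ell}\alpha_\ell$ and triple products $\alpha_{j-\ell}\alpha_{\ell-\ell'}\alpha_{\ell'}$ enter through the bilinear harmonic-extension weights attached to the junctions. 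Collecting the self-referential term and dividing by $(r/(b+2))^{-(j-1)} - 1 = (b+2)^{j-1}(2b+1)^{j-1}/b^{j-1} - 1$ yields the prefactor $\zeta_j$; the antisymmetric computation for $P_{j2}$ runs in parallel and produces $\iota_j$.

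For the normal derivatives I would apply the Gauss-Green formula to $u = P_{j1}$, using $\Delta P_{j1} = P_{(j-1)1}$ and the monomial boundary condition $\partial_n P_{j1}(q_1) = 0$. Testing against the harmonic function $f_{02}$, which equals $1$ at $q_2$ and $0$ at $q_1$, isolates $\eta_j$ and expresses it through the mixed integral $\int_{K_b} P_{(j-1)1}f_{02}\,d\mu$ and an energy term. Expanding $P_{(j-1)1}$ in the multiharmonic basis turns the integral into the inner product $b_{j-1}$ from~\eqref{eqforI}, while the self-similar decomposition $\int_{K_b} = (b+2)^{-1}\sum_i \int_{K_b}(\,\cdot\,\circ F_i)$ and~\eqref{eq:} produce the convolution $\sum_{\ell}\alpha_{j-\ell}\alpha_{\ell-1}$ of boundary values, with the convention $\alpha_{-1} = 0$. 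Repeating the computation with $P_{j2}$ gives $\gamma_j = \sum_\ell \beta_{j-\ell}\alpha_{\ell-1}$.

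The main obstacle is the bookkeeping of the self-similar expansion rather than any isolated idea. The $b$ inner cells $F_1,\dots,F_b$ and the two outer cells $F_{b+1},F_{b+2}$ contribute with genuinely different harmonic-extension weights, and one must separate the self-referential $\alpha_j$-contribution cleanly to land on the exact rational constants $\zeta_j$ and $\iota_j$ rather than merely their leading order. The precise index ranges also need care — the lower limit $\ell = 1$ in the $\alpha_j$ sum against $\ell = 0$ in the others, and the shift hidden in $\alpha_{\ell-1}$ — and must be reconciled against the initial data $\alpha_0,\alpha_1,\beta_0,\eta_0,\gamma_0$. I would guard against sign and normalization slips by specializing to $b=1$, where $K_1$ is the unit interval and the four sequences must reproduce the endpoint Taylor data of the classical monomials.
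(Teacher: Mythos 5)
The paper states this theorem without proof, saying only that the recursions ``are derived from the same method as \cite[Theorem 2.3, Theorem 2.12]{CalcSGI},'' so your outline can only be measured against that method. You have assembled the right ingredients --- the expansions $P_{j1}=f_{j1}+\sum_{m}\alpha_{j-m}f_{m2}$ and $P_{j2}=\sum_m\beta_{j-m}f_{m2}$ from Proposition~\ref{prop:polyexpan}, the cell-scaling identity~\eqref{eq:}, Gauss--Green, and self-similarity of $\mu$ --- but the step that is supposed to produce the $\alpha_j$ and $\beta_j$ recursions does not close as written. You propose to restrict $P_{j1}$ to the $b+2$ cells and ``match the resulting expressions at the interior junction vertices.'' Matching of \emph{values} there is vacuous: the expansion of $P_{j1}$ in the $f_{mk}$ is one globally defined continuous function, so evaluating it cell-by-cell and equating at a shared vertex is a tautology that only reproduces the quantities $p_\ell,q_\ell$ of~\eqref{eq:Defabpq}. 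The condition that actually generates a nontrivial self-referential equation for $\alpha_j$ is the compatibility of \emph{normal derivatives} at the interior junctions (equivalently, the pointwise Laplacian condition there), which transports the jet of $P_{j1}$ from $q_1$ across the chain of cells to $q_2$ and is the source of the cubic convolution $\alpha_{j-\ell}\bigl(2\alpha_\ell+\sum_{\ell'}\alpha_{\ell-\ell'}\alpha_{\ell'}\bigr)$; you never invoke it. Your bookkeeping of the self-referential weight is also inconsistent: you attribute a factor $(r/(b+2))^{j}$ to the cell fixing $q_2$ but then divide by $(r/(b+2))^{-(j-1)}-1$; the exponent $j-1$ in $\zeta_j$ (versus $j$ in $\iota_j$) must fall out of the derivation rather than be read off from the answer.

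The normal-derivative stage is closer to workable but still has a gap. Applying Gauss--Green to $u=P_{j1}$ and the harmonic $v=f_{02}$ gives
\begin{equation*}
\eta_j=\int_{K_b}P_{(j-1)1}f_{02}\,d\mu+P_{j1}(q_1)\,\partial_nf_{02}(q_1)+P_{j1}(q_2)\,\partial_nf_{02}(q_2),
\end{equation*}
and you silently drop the boundary terms (the term $\alpha_j\,\partial_nf_{02}(q_2)$ does not vanish for $j>0$). Moreover, expanding $P_{(j-1)1}$ in the multiharmonic basis turns the integral into $b_{j-1}+\sum_{m}\alpha_{j-1-m}\,a_m$, a convolution of the $\alpha$'s with the \emph{inner products} $a_\ell$ of~\eqref{eqforI} --- consistent with the appearance of $b_{j-1}$ in the stated formula --- not a convolution of $\alpha$'s with $\alpha$'s as you assert; the self-similar decomposition of the integral cannot convert the $a_\ell$ into $\alpha_\ell$. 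Your plan to sanity-check at $b=1$ is good and would expose both of these issues.
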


By applying the scaling identity~\eqref{eq:monoscaling}, we once again compute the values of $P_{jk}$ on $V_*$ so that $P_{jk}$ can be continuously extended to functions on $K_b$. This completes our construction of a second basis of monomials on the bubble-diamond fractals. For particular instances of $P_{jk}$ on the bubble-diamond fractal with branching parameter $b = 2$, see Figure \ref{fig:monofjk}.

\begin{figure}[b!]
    \centering
    \includegraphics[width=0.9\linewidth]{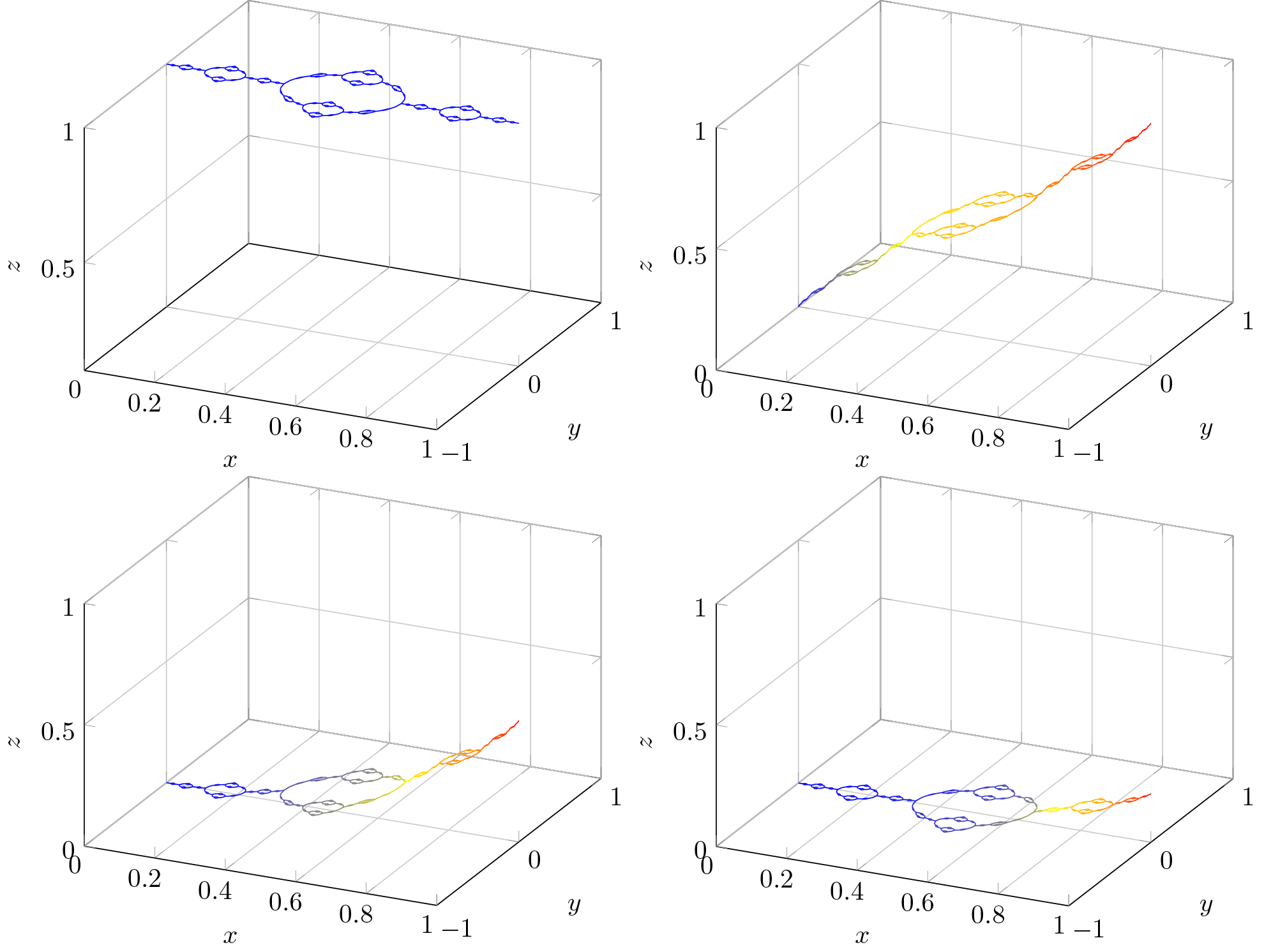}
    \caption{The monomials $P_{01}, P_{02}, P_{11}, P_{12}$ on the bubble-diamond fractal with branching parameter $b=2$.}
    \label{fig:monofjk}
\end{figure}

Finally, we will construct the Legendre polynomials on $K_b$ by performing the Gram-Schmidt orthogonalization process on the above monomial basis. To this end, the next result allows us to compute the inner products for the monomials defined in Definition~\ref{def:mono}

\begin{theorem}
For all $j,j' \geq 0$, we have that
\begin{equation}\label{innerforGS}
\begin{cases} 
    \langle P_{j1}, P_{j'1} \rangle &= \sum_{\ell = 0}^j (\alpha_{j-\ell} \eta_{j' +\ell + 1} - \alpha_{j' + \ell + 1}\eta_{j - \ell}), \\[1ex]
    \langle P_{j1}, P_{j'2} \rangle &= \sum_{\ell = 0}^j (\alpha_{j-\ell} \gamma_{j' +\ell + 1} - \beta_{j' + \ell + 1}\eta_{j - \ell}), \\[1ex]
    \langle P_{j2}, P_{j'2} \rangle &= \sum_{\ell = 0}^j (\beta_{j-\ell} \gamma_{j' +\ell + 1} - \beta_{j' + \ell + 1}\gamma_{j - \ell}).
    \end{cases}
\end{equation}

\end{theorem}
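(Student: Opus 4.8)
The plan is to reduce each inner product to boundary data by repeatedly integrating by parts, exploiting the defining relation $\Delta P_{(j+1)k} = P_{jk}$ established in the proof of the monomial scaling identity. The essential tool is Green's second identity, which I would first derive from the Gauss-Green formula: since $\mathcal{E}$ is symmetric, writing the Gauss-Green formula with the two arguments exchanged and subtracting yields
\begin{equation*}
    \int_{K_b}\left(u\,\Delta v - v\,\Delta u\right)d\mu = \sum_{q \in V_0}\left(u(q)\,\partial_n v(q) - v(q)\,\partial_n u(q)\right)
\end{equation*}
for all $u, v \in \text{dom}\,\Delta$. All of the monomials lie in $\text{dom}\,\Delta$, so this identity applies to them.

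First I would treat $\langle P_{j1}, P_{j'1} \rangle = \int_{K_b} P_{j1}P_{j'1}\,d\mu$. Applying Green's identity with $u = P_{j1}$ and $v = P_{(j'+1)1}$, and using $\Delta P_{(j'+1)1} = P_{j'1}$ together with $\Delta P_{j1} = P_{(j-1)1}$, converts the integral into $\int_{K_b} P_{(j-1)1}P_{(j'+1)1}\,d\mu$ plus a boundary sum over $V_0$; in other words, it shifts the index pair $(j,j')$ to $(j-1, j'+1)$ while preserving $j + j'$. I would iterate this $j+1$ times. The bulk integral terminates at $\int_{K_b} P_{(-1)1}P_{(j+j'+1)1}\,d\mu = 0$, since $P_{01}$ is harmonic and hence $P_{(-1)1} = \Delta P_{01} = 0$. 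What remains is the telescoped sum of boundary terms
\begin{equation*}
    \langle P_{j1}, P_{j'1} \rangle = \sum_{\ell=0}^j \sum_{q \in V_0}\left(P_{(j-\ell)1}(q)\,\partial_n P_{(j'+\ell+1)1}(q) - P_{(j'+\ell+1)1}(q)\,\partial_n P_{(j-\ell)1}(q)\right).
\end{equation*}
The other two inner products follow in exactly the same way, replacing the appropriate factor $P_{\cdot 1}$ by $P_{\cdot 2}$ in $u$ and/or $v$ and invoking $\Delta P_{(j+1)2} = P_{j2}$.

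Finally I would evaluate the boundary sums using the defining conditions of the monomials in Definition~\ref{def:mono} together with the notation in~\eqref{monoconst}. At $q_2$ the four quantities $P_{m1}(q_2)$, $P_{m2}(q_2)$, $\partial_n P_{m1}(q_2)$, and $\partial_n P_{m2}(q_2)$ are exactly $\alpha_m$, $\beta_m$, $\eta_m$, and $\gamma_m$, which produces the three stated right-hand sides. The step I expect to require the most care is verifying that every contribution at $q_1$ vanishes: the boundary conditions force $P_{m1}(q_1) = \delta_{m0}$, $\partial_n P_{m1}(q_1) = 0$, $P_{m2}(q_1) = 0$, and $\partial_n P_{m2}(q_1) = \delta_{m0}$, and one must check in each of the three cases that whenever a nonzero $\delta_{m0}$ factor appears it is paired with a factor that vanishes (because the shifted index $j'+\ell+1$ is always at least $1$, killing the corresponding $\delta_{\,\cdot\,0}$ term). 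Once this bookkeeping is carried out the entire $q_1$ sum drops, only the $q_2$ terms survive, and the formulas in~\eqref{innerforGS} follow.
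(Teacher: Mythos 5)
Your argument is correct and is essentially the paper's own proof: the paper obtains the same intermediate identity $\langle P_{ji}, P_{ki'}\rangle = \sum_{\ell=0}^{j}\sum_{n=1}^{2}\bigl(P_{(j-\ell)i}(q_n)\partial_n P_{(k+\ell+1)i'}(q_n) - P_{(k+\ell+1)i'}(q_n)\partial_n P_{(j-\ell)i}(q_n)\bigr)$ by citing the repeated Gauss--Green integration-by-parts procedure of \cite[Lemma 2.1]{OST}, which is exactly the iteration of Green's second identity you spell out, and then kills the $q_1$ terms by the same observation that $P_{(k+\ell+1)i'}(q_1) = \partial_n P_{(k+\ell+1)i'}(q_1) = 0$ for $k+\ell+1 \geq 1$. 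Your version simply makes the cited step and the boundary bookkeeping explicit.
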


\begin{proof}

Following the procedure of \cite[Lemma 2.1]{OST}, we find that
\begin{equation*}
    \langle P_{ji}, P_{ki'}\rangle = \sum_{\ell = 0}^j \sum_{n=1}^2 \left(P_{(j-\ell)i}(q_n)\partial_n P_{(k+\ell+1)i'}(q_n) - P_{(k + \ell + 1)i'}(q_n)\partial_nP_{(j-\ell)i}(q_n)\right).
\end{equation*}
From the above definitions, this simplifies to
\begin{equation*}
    \langle P_{j1}, P_{k1}\rangle = \sum_{\ell = 0}^j (\alpha_{j-\ell} \eta_{k +\ell + 1} - \alpha_{k + \ell + 1}\eta_{j - \ell}),
\end{equation*}
where we note that $P_{(k+\ell+1)i'}(q_1) = \partial_n P_{(k+\ell+1)i'}(q_1) = 0$ for all $k,\ell \geq 0$. The rest of the identities hold through a comparable method.
\end{proof}

We quickly note that we can alternatively express
\begin{equation*}
    \langle P_{j2}, P_{j'1} \rangle = \sum_{\ell = 0}^j (\beta_{j-\ell} \eta_{j' +\ell + 1} - \alpha_{j' + \ell + 1}\gamma_{j - \ell})
\end{equation*}
by examining the symmetry of the coefficients. 

We now apply the Gram-Schmidt orthogonalization process to the monomials $\{P_{j1}, P_{j2}\}_{j\geq 0}$.  To make more clear the association with the Legendre polynomials from classical calculus, we will make the change in notation $P_{(2j + k)} = P_{jk}$ which makes $P_{(2j + k)} \in \mathcal{H}_j$. We will then construct a family of orthogonal polynomials as follows.

\begin{definition}\label{def:opbdf} The \textit{orthogonal polynomials}  $\{p_j\}_{j\geq 0}$ are obtained by applying the Gram-Schmidt process to $\{P_{j}\}_{j\geq 1}$, where $p_{0} = P_{0}$ and for $j\geq 1$ we have that
\begin{equation}
    p_{j}(x) = P_{j}(x) - \sum_{\ell = 0}^{j-1}d^2_{\ell} \langle P_{j}, p_{\ell}\rangle p_{\ell}(x)
\end{equation}
for coefficients $d_{j} = ||p_{j}||^{-1}$. 
\end{definition}

By normalizing this family of polynomials as $\pi_{j} = d_{j}p_{j}$, we obtain the analog for the Legendre orthogonal polynomials on $K_b$.

\begin{definition}\label{def:ONP}

For each $j \geq 0$, we will call $\pi_{j} \in \mathcal{H}_{j}$ the \textit{Legendre polynomial of degree} $j$ as constructed above, which satisfies 
\begin{equation}
    \langle \pi_{j}, \pi_{j'}\rangle = \delta_{jj'}
\end{equation}
for all $j,j' \geq 0$.

\end{definition}

Figure \ref{fig:legendre} displays some of these orthogonal polynomials on a bubble-diamond fractal with branching parameter $b = 1,2$. In particular, when $b = 1$, we observe that this construction recovers the classical Legendre orthogonal polynomials on the unit interval.

\begin{figure}[b!]
    \centering
    \includegraphics[width=1.0\linewidth]{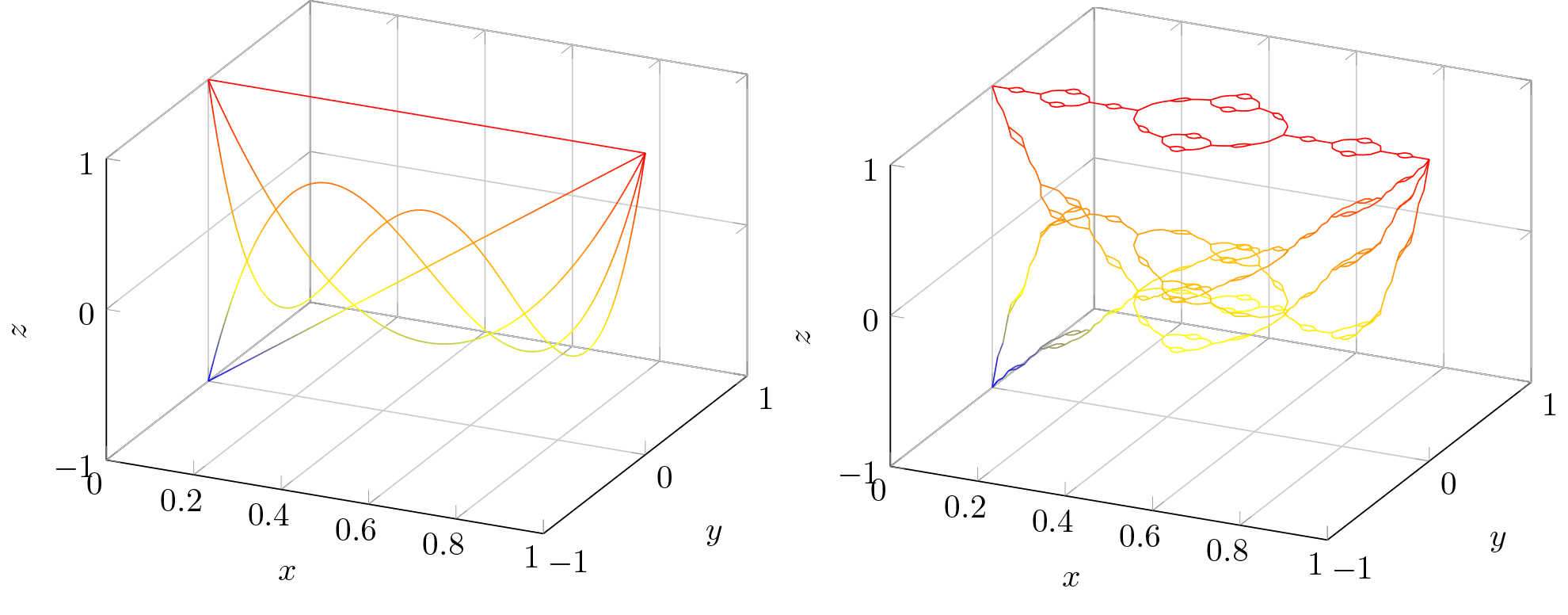}
    \caption{The first five Legendre polynomials on the bubble-diamond fractals with branching parameter $b=1,2$.}
    \label{fig:legendre}
\end{figure}

We show that, like in the classical case,  the family of orthogonal polynomials $\{p_j\}_{j\geq 0}$ satisfies a three-term recursion formula. To this end, consider for all $j \geq 0$ the auxiliary polynomial $g_{j}$ given by 
\begin{equation}
    g_{j+1}(x)=-\int_{K_b}G(x,y)p_{j}(y)\, d\mu(y),
\end{equation}
where $G$ is Green's function defined in Proposition~\ref{def:greensfunction}. It follows from construction that 
\begin{equation}
    \Delta g_{j+1}=p_{j},
\end{equation}
where $g_{j+1}\in \mathcal{H}_{j+1}$ is a polynomial of degree at most $j+1$. As such, we have the following three-term recursion formula.

\begin{theorem}\label{3termsrec} For all $j\geq 0$, we have that
\begin{equation}\label{eq3terms}
g_{j+1}(x)=p_{j+1}(x)+s_{j}p_{j}(x)+t_{j}p_{j-1}(x),
\end{equation} 
where $p_{-1}=g_0=0$. Furthermore, the coefficients  $\{s_{j}\}_{j\geq 0}$ and   $\{t_{j}\}_{j\geq 0}$ are given by 
  \begin{equation}\label{coeffts}
\begin{cases}
    s_{j}=d_{j}^2\langle g_{j+1}, p_{j}\rangle,\\[1ex]
    t_j=d_{j-1}^2d_{j}^{-2} .
\end{cases}  
\end{equation}
In particular, we have that $d_{j}^{-2}=d_{0}^{-2}t_{1}t_{2}\hdots t_{j}$.  
\end{theorem}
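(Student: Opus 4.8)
The plan is to expand the auxiliary polynomial $g_{j+1}$ in the orthogonal system $\{p_\ell\}$ and show that only three consecutive terms survive. Since $g_{j+1}$ has degree at most $j+1$, it lies in $\operatorname{span}\{p_0,\dots,p_{j+1}\}$, so I would write $g_{j+1} = \sum_{\ell=0}^{j+1} c_\ell p_\ell$ with $c_\ell = d_\ell^2\langle g_{j+1},p_\ell\rangle$, using $\|p_\ell\|^2 = d_\ell^{-2}$. The engine of the whole argument is the self-adjointness of the Green operator $\mathcal{G}f = -\int_{K_b}G(\cdot,y)f\,d\mu$: because Green's function is symmetric, $G(x,y)=G(y,x)$ already at the finite level in \eqref{eq:greensfunctionfinite}, Fubini yields $\langle \mathcal{G}f,h\rangle = \langle f,\mathcal{G}h\rangle$, and by construction $g_{m+1}=\mathcal{G}p_m$ for every $m$.

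First I would eliminate the low-order coefficients. For each $\ell$, self-adjointness gives
\[ \langle g_{j+1},p_\ell\rangle = \langle \mathcal{G}p_j, p_\ell\rangle = \langle p_j, \mathcal{G}p_\ell\rangle = \langle p_j, g_{\ell+1}\rangle. \]
Since $g_{\ell+1}$ has degree at most $\ell+1$, it lies in $\operatorname{span}\{p_0,\dots,p_{\ell+1}\}$, which is orthogonal to $p_j$ as soon as $\ell+1\le j-1$. Hence $c_\ell=0$ for $\ell\le j-2$, and the expansion collapses to $g_{j+1}=c_{j+1}p_{j+1}+c_jp_j+c_{j-1}p_{j-1}$; the convention $p_{-1}=g_0=0$ absorbs the degenerate cases $j=0,1$.

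It then remains to identify the three surviving coefficients. The middle one is immediate from the expansion: $s_j:=c_j=d_j^2\langle g_{j+1},p_j\rangle$, matching \eqref{coeffts}. For the leading coefficient I would show that $g_{j+1}$ is monic relative to $p_{j+1}$: Gram--Schmidt gives $p_m=P_m+(\text{lower order})$, and since the Green operator sends the top monomial of $p_j$ to that of $p_{j+1}$ with coefficient one, $g_{j+1}-p_{j+1}$ has degree at most $j$, forcing $c_{j+1}=1$. Granting this normalization for every index, $g_j=\mathcal{G}p_{j-1}$ satisfies $g_j-p_j\in\operatorname{span}\{p_0,\dots,p_{j-1}\}$, so the bottom coefficient follows from self-adjointness and orthogonality:
\[ \langle g_{j+1},p_{j-1}\rangle = \langle p_j, g_j\rangle = \langle p_j,p_j\rangle = d_j^{-2}, \]
whence $t_j:=c_{j-1}=d_{j-1}^2d_j^{-2}$, as claimed. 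The product formula is then a telescoping identity: writing $t_\ell=d_{\ell-1}^2d_\ell^{-2}=\|p_\ell\|^2/\|p_{\ell-1}\|^2$, the product $t_1t_2\cdots t_j$ collapses to $\|p_j\|^2/\|p_0\|^2=d_j^{-2}/d_0^{-2}$.

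The step I expect to be the genuine obstacle is pinning down the normalization $c_{j+1}=1$, since this is precisely where the careful bookkeeping of the monomial expansion and the degree-raising action of $\mathcal{G}$ must be reconciled; the self-adjointness and orthogonality steps above are otherwise formal once the degree bound on $g_{j+1}$ is in hand. Should the direct monic comparison prove delicate, an alternative is to establish $t_j=d_{j-1}^2d_j^{-2}$ independently and then recover $c_{j+1}=1$ from the consistency relation $\langle g_{j+1},p_{j+1}\rangle=\langle p_j,g_{j+2}\rangle=t_{j+1}d_j^{-2}$ together with the already-computed value of $t_{j+1}$.
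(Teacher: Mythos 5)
Your argument is essentially identical to the paper's proof: both use the symmetry of Green's function to get $\langle g_{j+1},p_\ell\rangle=\langle p_j,g_{\ell+1}\rangle$ and hence collapse the expansion to three terms, the monic comparison $g_{j+1}=P_{j+1}+(\text{lower order})$ to force the leading coefficient to be $1$, and the identity $\langle g_{j+1},p_{j-1}\rangle=\langle p_j,g_j\rangle=\langle p_j,p_j\rangle$ for $t_j$. The normalization step you flag as the genuine obstacle is exactly the one the paper also dispatches with the same one-line monomial-comparison remark, so nothing further is needed.
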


\begin{proof} 
For all $j\geq 0$ and $\ell \in \{0, 1, \hdots, j\}$, we see that 
\begin{equation}
    \langle g_{j+1}, p_{\ell } \rangle = - \iint_{K_b\times K_b} G(x,y) p_{j}(y)p_{\ell }(x)d\mu(y) d\mu(x)= \langle p_{j}, g_{\ell+1} \rangle.
\end{equation}
Because $g_{\ell+1}$ is a polynomial of degree at most $\ell+1$, we have that $g_{\ell + 1}$ is orthogonal to all $p_{j}$ for which $\ell+1 < j$. Thus, the expansion of $g_{j+1}\in \mathcal{H}_{j+1}$ with respect to the set of orthogonal polynomials $\{p_{\ell}\}_{\ell=0}^{j+1}$ is of the form 
\begin{equation}
    g_{j+1}=ap_{j+1} +bp_{j}+cp_{j-1}.
\end{equation}
We note that $p_{j}$ can be expressed as a sum of $P_{j}$ and lower-order terms, which implies that $g_{j+1}$ can be expressed as a sum of $P_{j+1}$ and lower order terms. We thus have that  $g_{j+1}-p_{j+1}$ is orthogonal to $p_{j+1}$, and so $a=1$. Taking the inner product of $g_{j+1}$ with $p_{j}$, we find the coefficients $s_{j}$. Furthermore, taking the inner product of $g_{j+1}$ with $p_{j-1}$ yields
\begin{align*}
t_{j}\langle p_{j-1}, p_{j-1}\rangle&=  
\langle g_{j+1}, p_{j-1}\rangle \\
&= \langle p_{j}, g_{j}\rangle\\
&= \langle p_{j}, p_{j}+ s_{j-1}p_{j-1}+t_{j-1}p_{j-2}\rangle \\
&= \langle p_{j}, p_{j}\rangle,
\end{align*}
which gives us the desired formula for the coefficients $t_{j}$. 
\end{proof}

The following result about the orthonormal sequence $\{\pi_j\}_{j\geq 0}$ is a consequence of Theorem~\ref{3termsrec}.

\begin{corollary}\label{3termrecON} Let $\{\pi_j\}_{j=0}^\infty$ be the sequence of orthogonal polynomials given in Definition~\ref{def:ONP}. Let $\tilde{g}_0=0$ and for $j\geq 0$ define $$\tilde{g}_{j+1}(x)=-\int_{K_{b}} G(x,y) \pi_j(y)d\mu(y).$$ The following three-term recursion formula holds.
\begin{equation}\label{eq:3termonp}
\tilde{g}_{j+1}(x)=\sqrt{t_{j+1}} \pi_{j+1}(x)+s_j\pi_j(x)+\sqrt{t_{j}}\pi_{j-1}(x),
\end{equation}
where the sequence $\{s_j\}_{j\geq 0}$ and $\{t_j\}_{j\geq 0}$ are defined in Theorem~\ref{3termsrec}.
    
\end{corollary}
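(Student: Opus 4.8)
The plan is to obtain the normalized recursion as a direct rescaling of the recursion already established in Theorem~\ref{3termsrec}, tracking how the normalization constants $d_j$ propagate through each term. The starting point is the observation that the Green operator $f \mapsto -\int_{K_b} G(\cdot, y) f(y)\, d\mu(y)$ is linear, so that the relation $\pi_j = d_j p_j$ immediately yields $\tilde g_{j+1} = d_j\, g_{j+1}$. This reduces the claim to a purely algebraic manipulation of~\eqref{eq3terms}.

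First I would multiply the three-term recursion~\eqref{eq3terms} through by $d_j$ and re-express each $p_m$ in terms of the orthonormal polynomials via $p_m = d_m^{-1} \pi_m$. This produces
\begin{equation*}
    \tilde g_{j+1}(x) = \frac{d_j}{d_{j+1}}\,\pi_{j+1}(x) + s_j\, \pi_j(x) + \frac{t_j\, d_j}{d_{j-1}}\,\pi_{j-1}(x),
\end{equation*}
so that the middle coefficient is already $s_j$ as desired, and it remains only to match the outer two coefficients.

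The final step is to reconcile $d_j/d_{j+1}$ and $t_j d_j / d_{j-1}$ with $\sqrt{t_{j+1}}$ and $\sqrt{t_j}$. For this I would use the formula $t_j = d_{j-1}^2 d_j^{-2}$ from~\eqref{coeffts}; since each $d_j = \|p_j\|^{-1}$ is strictly positive, taking the positive square root gives $\sqrt{t_j} = d_{j-1}/d_j$. Substituting $j+1$ for $j$ identifies the leading coefficient $d_j/d_{j+1} = \sqrt{t_{j+1}}$, while a one-line simplification gives $t_j d_j / d_{j-1} = (d_{j-1}^2/d_j^2)(d_j/d_{j-1}) = d_{j-1}/d_j = \sqrt{t_j}$, which is exactly~\eqref{eq:3termonp}.

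Since the argument is essentially bookkeeping, I do not anticipate a genuine obstacle; the only points requiring care are the positivity of the $d_j$ (so that the square roots are taken on the correct, positive branch), the consistency of the index shift between $\sqrt{t_{j+1}}$ and $\sqrt{t_j}$, and the degenerate base case $j = 0$, where the conventions $p_{-1} = g_0 = 0$ force $\pi_{-1} = \tilde g_0 = 0$ so that the $\pi_{j-1}$ term drops out as it should.
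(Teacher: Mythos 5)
Your proposal is correct and is precisely the intended derivation: the paper offers no explicit proof, presenting the corollary as an immediate consequence of Theorem~\ref{3termsrec}, and your rescaling via $\tilde g_{j+1}=d_j g_{j+1}$ together with $\sqrt{t_j}=d_{j-1}/d_j$ is exactly that bookkeeping. Your attention to the positivity of the $d_j$ and the degenerate case $j=0$ is appropriate and introduces no new issues.
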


The following result shows that the sequences in the three-term recursion formula~\eqref{eq3terms} are bounded.

\begin{corollary}\label{cor:bdseq}
Let $\{s_j\}_{j\geq 0}$ and $\{t_j\}_{j\geq 0}$ be defined as in Theorem~\ref{3termsrec}. For each $j\geq 0$, we have that
\begin{equation}\label{eq:eqbd}
\begin{cases}
      0\leq t_j\leq \|G\|_2^2, \\[1ex]
  -\|G\|_2\leq s_j\leq 0,
\end{cases}
\end{equation} where $\|G\|_2$ is the $L^2$ norm of the Green function. Furthermore,  
$$ \|p_j\|=d_j^{-1}\leq d_0^{-1}\|G\|_2^{j}. $$
\end{corollary}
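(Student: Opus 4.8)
The plan is to read everything off from the single observation that $g_{j+1} = \mathcal{G}p_j$, where $\mathcal{G}$ denotes the Green operator $(\mathcal{G}f)(x) = -\int_{K_b} G(x,y)f(y)\,d\mu(y)$. Two structural properties of $\mathcal{G}$ will carry the whole argument. First, since the kernel $G$ is symmetric and square-integrable on $K_b \times K_b$, the operator $\mathcal{G}$ is self-adjoint on $L^2(K_b)$, and the standard Hilbert--Schmidt estimate gives $\|\mathcal{G}f\| \leq \|G\|_2\,\|f\|$ for every $f$, so that the operator norm of $\mathcal{G}$ is at most $\|G\|_2$. Second, $\mathcal{G}$ is negative semidefinite: if $u = \mathcal{G}f$, then $\Delta u = f$ with $u|_{V_0} = 0$, so the Gauss--Green formula (with vanishing boundary term) yields $\langle f, \mathcal{G}f\rangle = \int_{K_b} u\,(\Delta u)\,d\mu = -\mathcal{E}(u,u) \leq 0$.

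For the bounds on $s_j$, I would start from $s_j = d_j^2\langle g_{j+1}, p_j\rangle = \langle \mathcal{G}p_j, p_j\rangle/\|p_j\|^2$. Negative semidefiniteness immediately gives $s_j \leq 0$. For the lower bound, Cauchy--Schwarz followed by the operator-norm estimate gives $|\langle \mathcal{G}p_j, p_j\rangle| \leq \|\mathcal{G}p_j\|\,\|p_j\| \leq \|G\|_2\,\|p_j\|^2$, whence $|s_j| \leq \|G\|_2$ and therefore $-\|G\|_2 \leq s_j \leq 0$.

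For $t_j$, recall from the proof of Theorem~\ref{3termsrec} that $t_j = \|p_j\|^2/\|p_{j-1}\|^2 \geq 0$. To obtain the upper bound I would compare two expressions for $\|g_{j+1}\|^2$. On one hand, orthogonality of $\{p_\ell\}$ and the expansion $g_{j+1} = p_{j+1} + s_j p_j + t_j p_{j-1}$ give $\|g_{j+1}\|^2 = \|p_{j+1}\|^2 + s_j^2\|p_j\|^2 + t_j^2\|p_{j-1}\|^2 \geq t_j^2\|p_{j-1}\|^2$. On the other hand, $\|g_{j+1}\|^2 = \|\mathcal{G}p_j\|^2 \leq \|G\|_2^2\,\|p_j\|^2 = \|G\|_2^2\,t_j\,\|p_{j-1}\|^2$. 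Combining these and dividing by $t_j\|p_{j-1}\|^2$ (the case $t_j = 0$ being trivial) yields $t_j \leq \|G\|_2^2$. Finally, the norm bound follows by telescoping: by Theorem~\ref{3termsrec} we have $d_j^{-2} = d_0^{-2}\,t_1 t_2 \cdots t_j \leq d_0^{-2}\,\|G\|_2^{2j}$, so $\|p_j\| = d_j^{-1} \leq d_0^{-1}\|G\|_2^{j}$.

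I expect the only genuine content to be the two properties of $\mathcal{G}$ isolated in the first paragraph; once self-adjointness, the Hilbert--Schmidt operator bound, and negative semidefiniteness are in hand, the three estimates are short and essentially forced. The main obstacle, such as it is, is verifying that the boundary term in the Gauss--Green formula truly vanishes, which it does precisely because $g_{j+1} = \mathcal{G}p_j$ solves the Dirichlet problem with zero boundary data; everything else reduces to Cauchy--Schwarz and the orthogonal Pythagorean identity.
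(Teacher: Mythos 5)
Your proof is correct, and it is worth comparing to the paper's. For the bound on $t_j$ the two arguments are essentially the same: both hinge on the Hilbert--Schmidt estimate $\|g_{j+1}\|\leq\|G\|_2\|p_j\|$ together with the identity $t_j=d_{j-1}^2d_j^{-2}=\|p_j\|^2/\|p_{j-1}\|^2$; the paper applies Cauchy--Schwarz directly to $\langle g_{j+1},p_{j-1}\rangle=t_jd_{j-1}^{-2}$, while you extract the same inequality from the Pythagorean expansion $\|g_{j+1}\|^2\geq t_j^2\|p_{j-1}\|^2$ --- a repackaging, not a new idea. The genuine divergence is in the bound $-\|G\|_2\leq s_j\leq 0$: the paper obtains the sign of $s_j$ by invoking the spectral expansion of the Green function in eigenfunctions of the Laplacian (citing \cite[Theorem 2.1]{OST}), whereas you prove negative semidefiniteness of the Green operator directly from the Gauss--Green formula, using that $u=\mathcal{G}f$ solves the Dirichlet problem with zero boundary data so that $\langle \mathcal{G}f,f\rangle=-\mathcal{E}(u,u)\leq 0$. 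Your route is more self-contained and avoids importing the spectral decomposition; the paper's route is shorter on the page but outsources the key positivity fact to a cited result. Your lower bound $|s_j|\leq\|G\|_2$ via Cauchy--Schwarz and the operator-norm estimate, and the telescoping $d_j^{-2}=d_0^{-2}t_1\cdots t_j\leq d_0^{-2}\|G\|_2^{2j}$ for the final norm bound, match what the paper intends. The only points to make explicit in a polished write-up are that $\|p_{j-1}\|\neq 0$ when you divide, and that $g_{j+1}\in\operatorname{dom}\Delta$ with vanishing boundary values so the Gauss--Green boundary term really drops out --- both of which you already flag.
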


\begin{proof}
The fact that $t_j\geq 0$ follows from the second equation in~\eqref{coeffts}. In addition, $$t_jd_{j-1}^{-2}=|\langle g_{j+1}, p_{j-1}\rangle| \leq \|g_{j+1}\|_2 d_{j-1}\leq \|G\|_2 d_{j}^{-1}d_{j-1}^{-1}. $$ Consequently, $t_j\leq \|G\|_2 d_{j-1} d_{j}^{-1}=\|G\|_2 t_j^{1/2}$, from which the first inequality in~\eqref{eq:eqbd} follows. 

The second inequality follows from writing the Green function in terms of the eigenvalues and eigenvectors of the Laplacian, see \cite[Theorem 2.1]{OST}. 
\end{proof}




\section{Numerics}

The figures and numerics included in this paper were generated by an implementation of the above constructions in a \texttt{Python} script by embedding $K_b$ in $[0, 1]^2$, made available as a GitHub repository \href{https://github.com/COsborne25/orthogonal-polynomials-bubble-diamond}{here}. The included \texttt{README.md} file contains more specific details on the implementation of the code.

\section{Acknowledgments}
The authors thank Gamal Mograby for his help during the 2022 VERSEIM-REU at Tufts University. 

E.~Axinn, C.~Osborne, O.~Rigatti, and H.~Shi were supported by the National Science Foundation through the VERSEIM-REU  at Tufts University (DMS-2050412).
K.~A.~Okoudjou was partially supported by the National Science Foundation, grants DMS-1814253, DMS-2205771 and DMS-2309652. 

\bibliographystyle{plain}
\bibliography{refs}

\def\cprime{$'$}
\begin{thebibliography}{10}

\bibitem{Akkermans2009ComplexDim}
E.~Akkermans, G.~Dunne, and A.~Teplyaev.
\newblock Physical consequences of complex dimensions of fractals.
\newblock {\em {EPL} (Europhysics Letters)}, 88(4):40007, nov 2009.

\bibitem{CalcSGII}
Nitsan Ben-Gal, Abby Shaw-Krauss, Robert~S. Strichartz, and Clint Young.
\newblock Calculus on the {S}ierpinski gasket. {II}. {P}oint singularities, eigenfunctions, and normal derivatives of the heat kernel.
\newblock {\em Trans. Amer. Math. Soc.}, 358(9):3883--3936, 2006.

\bibitem{HamblyKumagai2010}
B.~Hambly and T.~Kumagai.
\newblock Diffusion on the scaling limit of the critical percolation cluster in the diamond hierarchical lattice.
\newblock {\em Comm. Math. Phys.}, 295(1):29--69, 2010.

\bibitem{SOPKasso}
Q.~Jiang, T.~Lan, K.~Okoudjou, R.~Strichartz, S.~Sule, S.~Venkat, and Xiaoduo Wang.
\newblock Sobolev orthogonal polynomials on the {S}ierpinski gasket.
\newblock {\em J. Fourier Anal. Appl.}, 27(3):Paper No. 38, 38, 2021.

\bibitem{KigamiBook2001}
J.~Kigami.
\newblock {\em Analysis on fractals}, volume 143 of {\em Cambridge Tracts in Mathematics}.
\newblock Cambridge University Press, Cambridge, 2001.

\bibitem{MalozemovTeplyaev1995}
L.~Malozemov and A.~Teplyaev.
\newblock Pure point spectrum of the {L}aplacians on fractal graphs.
\newblock {\em J. Funct. Anal.}, 129(2):390--405, 1995.

\bibitem{BubbleDiamond2023}
E.~Melville, G.~Mograby, N.~Nagabandi, L.~Rogers, and A.~Teplyaev.
\newblock {Gaps labeling theorem for the Bubble-diamond self-similar graphs}.
\newblock {\em arXiv:2204.11401v2, (2023).}, 2024.

\bibitem{CalcSGI}
J.~Needleman, R.~Strichartz, A.~Teplyaev, and P.~Yung.
\newblock Calculus on the {S}ierpinski gasket. {I}. {P}olynomials, exponentials and power series.
\newblock {\em J. Funct. Anal.}, 215(2):290--340, 2004.

\bibitem{OST}
K.~A. Okoudjou, R.~S. Strichartz, and E.~K. Tuley.
\newblock Orthogonal polynomials on the sierpinski gasket.
\newblock {\em Constructive Approximation}, 37(3):311--340, 2013.

\bibitem{BobBook2006}
R.~Strichartz.
\newblock {\em Differential equations on fractals}.
\newblock Princeton University Press, Princeton, NJ, 2006.
\newblock A tutorial.

\bibitem{splinesBob}
R.~Strichartz and M.~Usher.
\newblock Splines on fractals.
\newblock {\em Math. Proc. Cambridge Philos. Soc.}, 129(2):331--360, 2000.

\end{thebibliography}

\end{document}